\begin{document}

\newenvironment {proof}{{\noindent\bf Proof.}}{\hfill $\Box$ \medskip}

\newtheorem{theorem}{Theorem}[section]
\newtheorem{lemma}[theorem]{Lemma}
\newtheorem{condition}[theorem]{Condition}
\newtheorem{proposition}[theorem]{Proposition}
\newtheorem{remark}[theorem]{Remark}
\newtheorem{hypothesis}[theorem]{Hypothesis}
\newtheorem{corollary}[theorem]{Corollary}
\newtheorem{example}[theorem]{Example}
\newtheorem{definition}[theorem]{Definition}

\renewcommand {\theequation}{\arabic{section}.\arabic{equation}}
\def \non{{\nonumber}}
\def \hat{\widehat}
\def \tilde{\widetilde}
\def \bar{\overline}

\title{\large{{\bf Weak and strong solutions of general stochastic models }}}
                                                       
\author{{\small \begin{tabular}{l}                               
Thomas G. Kurtz \thanks{Research supported in part by NSF grant
DMS 11-06424}  \\   
Departments of Mathematics and Statistics  \\       
University of Wisconsin - Madison  \\                   
480 Lincoln Drive \\                                                          
Madison, WI  53706-1388    \\                         
kurtz@math.wisc.edu      \\
\url{http://www.math.wisc.edu/~kurtz/}  \\\                       
\end{tabular}} }

\date{December 29, 2013}

\maketitle

\begin{abstract}

Typically, a stochastic model relates stochastic ``inputs'' 
and, perhaps, controls to stochastic ``outputs''.  A general 
version of the Yamada-Watanabe and Engelbert theorems 
relating existence and uniqueness of weak and strong 
solutions of stochastic equations is given in this context.  
A notion of {\em compatibility\/} between inputs and outputs is 
critical in relating the general result to its classical 
forebears.  The relationship between the compatibility 
condition and the usual formulation of stochastic 
differential equations driven by semimartingales is 
discussed.

\vspace{.1in}

\noindent {\bf Key words:}  weak solution, strong solution, 
stochastic models, pointwise uniqueness, pathwise 
uniqueness, compatible solutions, stochastic differential 
equations, stochastic partial differential equations, 
backward stochastic differential equations, Meyer-Zheng 
conditions, Jakubowski topology

\vspace{.1in}

\noindent {\bf MSC 2010 Subject Classification:}  Primary:  60G05
Secondary:  60H10, 60H15, 60H20, 60H25.

\end{abstract}

\setcounter{equation}{0}

\section{Introduction and main theorem}
This paper is essentially a rewrite of \cite{Kur07} 
following a realization that the general, abstract theorem 
in that paper was neither as abstract as it could be nor 
as general as it should be.  The reader familiar with 
the earlier paper may not be pleased by the greater
abstraction, but an example indicating the value of the 
greater generality will be given in Section \ref{comp}.  
To simplify matters for the reader,  proofs of 
several lemmas that originally appeared in the earlier 
paper are included, but the reader should refer to the 
earlier paper for more examples and additional 
references.

As with the results of the earlier paper, the main 
theorem given here generalizes the famous theorem of 
\cite{YW71} giving the relationship between weak and 
strong solutions of an It\^o equation for a diffusion and 
their existence and uniqueness.  A second reason for this 
rewrite is that the main observation ensuring that
the main theorem gives the Yamada-Watanabe result is buried in 
a proof in the earlier paper.  Here it is stated separately 
as Lemma \ref{compcpl}.

The motivation of the original
Yamada-Watanabe result arises naturally in the process 
of proving existence of solutions of a stochastic 
differential equation or, in the context of the present 
paper, existence of a stochastic model determined by 
constraints that may but need not be equations.  The 
basic existence argument starts by identifying a 
sequence of approximations to the equation (or model) 
for which existence of solutions is simple to prove, 
proving relative compactness of the sequence of 
approximating solutions, and then verifying that any 
limit point is a solution of the original equation (model).  
The issue addressed by the Yamada-Watanabe theorem is 
that frequently, the kind of compactness verified is 
weak or distributional compactness.  Consequently, what 
can be claimed about the limit is that there exists a 
probability space on which processes are defined that 
satisfy the original equation.  Such solutions are called 
{\em weak solutions}, and their existence leaves open the 
question of whether there exists a solution on every 
probability space that supports the stochastic inputs of 
the model, that is, the Brownian motion and initial 
position in the original It\^o equation context.  The 
assertion of the Yamada-Watanabe theorem and Theorem 
\ref{gyw} below is that if a strong enough form of 
uniqueness can be verified, then existence of a weak 
solution implies existence on every such probability 
space.  

A stochastic model describes the relationship between 
stochastic inputs and stochastic outputs.  For example, 
in the case of the It\^o equation, 
\[X(t)=X(0)+\int_0^t\sigma (X(s))dW(s)+\int_0^tb(X(s))ds,\]
$X(0)$ and $W$ are the stochastic inputs and the solution $X$ 
gives the outputs.  Typically, the distribution of the 
inputs is specified (for example, the initial distribution 
is given and $X(0)$ is assumed independent of the 
Brownian motion $W$), and the model is determined by a 
set of constraints (possibly, but not necessarily, 
equations) that relate the inputs to the outputs.  In the 
general setting here, the inputs will be given by a 
random variable $Y$ with values in a complete, separable 
metric space $S_2$ and the outputs $X$ will take values in a 
complete, separable metric space $S_1$.  For the It\^o 
equation, we could take $S_2={\Bbb R}^d\times C_{{\Bbb R}^d}[0,\infty 
)$ and 
$S_1=C_{{\Bbb R}^d}[0,\infty )$.  

Let ${\cal P}(S_1\times S_2)$ be the space of probability measures on 
$S_1\times S_2$, and for random variables $(X,Y)$ in $S_1\times S_
2$, let 
$\mu_{X,Y}\in {\cal P}(S_1\times S_2)$ denote their joint distribution.  Our 
model is determined by specifying a distribution $\nu$ for 
the inputs $Y$ and a set of constraints $\Gamma$ relating $X$ and $
Y$.  
Let ${\cal P}_{\nu}(S_1\times S_2)$ be the set of $\mu\in {\cal P}
(S_1\times S_2)$ such that 
$\mu (S_1\times\cdot )=\nu$, and let ${\cal S}_{\Gamma ,\nu}$ be the subset of $
{\cal P}_{\nu}(S_1\times S_2)$ 
such that $\mu_{X,Y}\in {\cal S}_{\Gamma ,\nu}$ implies $(X,Y)$ meets the constraints 
in $\Gamma$.  Of course, since we are not placing any 
restriction on the nature of the constraints, ${\cal S}_{\Gamma ,
\nu}$ could 
be any subset of ${\cal P}_{\nu}(S_1\times S_2)$.  

For a second example, consider a typical stochastic 
optimization problem.

\begin{example}{\rm
Suppose $\Gamma_0$ is a collection of constraints of the form
\[E[\psi (X,Y)]<\infty\mbox{\rm \  and  }E[f_i(X,Y)]=0,\quad i\in 
{\cal I},\]
where $\psi\geq 0$ and $|f_i(x,y)|\leq\psi$.

Let $0\leq c(x,y)\leq\psi (x,y)$, and let $\Gamma$ be the set of 
constraints
obtained from $\Gamma_0$ by adding the requirement
\[\int c(x,y)\mu (dx\times dy)=\inf_{\mu'\in {\cal S}_{\Gamma_0,\nu}}
\int c(x,y)\mu'(dx\times dy).\]
It is natural to ask if the infimum is achieved with $X$ of 
the form $X=F(Y)$. \hfill $\Box$
}
\end{example}

In the terminology of \cite{Eng91} and \cite{Jac80}, 
$\mu\in {\cal S}_{\Gamma ,\nu}$ is a {\em joint solution measure\/} for our model $
(\Gamma ,\nu )$.
A {\em weak solution\/} (or simply a {\em solution\/}) for $(\Gamma 
,\nu )$ is any pair of 
random variables $(X,Y)$ defined on any probability space 
such that $Y$ has distribution $\nu$ and $(X,Y)$ meets the 
constraints in $\Gamma$, that is, $\mu_{X,Y}\in {\cal S}_{\Gamma 
,\nu}$.
We have the following definition for a strong solution.

\begin{definition}\ \label{strdef}
A solution $(X,Y)$ for $(\Gamma ,\nu )$ is a {\em strong solution\/} if
there exists a Borel measurable function $F:S_2\rightarrow S_1$ such 
that $X=F(Y)$ a.s.
\end{definition}

If a strong solution exists on some probability space, 
then a strong solution exists for any $Y$ with distribution 
$\nu$.  It is important to note that being a strong solution 
is a distributional property, that is, the joint 
distribution of $(X,Y)$ is determined by $\nu$ and $F$.
The following lemma helps to clarify the difference 
between a strong solution and a weak solution that does 
not correspond to a strong solution.

\begin{lemma}\label{sol}
Let $\mu\in {\cal P}_{\nu}(S_1\times S_2)$.
\begin{itemize}
\item[a)]There exists a 
transition function $\eta$ such that 
$\mu (dx\times dy)=\eta (y,dx)\nu (dy)$.

\item[b)]There exists a Borel measurable $G:S_2\times [0,1]\rightarrow 
S_1$ such that if $Y$ has 
distribution $\nu$ and $\xi$ is independent of $Y$ and uniformly 
distributed on $[0,1]$, $(G(Y,\xi ),Y)$ has distribution $\mu$.

\item[c)]$\mu$ corresponds to a strong solution if 
and only if $\eta (y,dx)=\delta_{F(y)}(dx)$.
\end{itemize}
\end{lemma}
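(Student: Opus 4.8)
The plan is to treat the three parts in order, with part (b) as the crux and parts (a) and (c) as applications of standard disintegration theory. For part (a), the decisive fact is that $S_1$ is a complete separable metric space, so regular conditional distributions exist. I would view $\eta(y,\cdot)$ as a regular conditional distribution of $X$ given $Y$ under $\mu$: because $S_1$ is Polish, the existence theorem for regular conditional probabilities produces a transition function $\eta:S_2\times{\cal B}(S_1)\to[0,1]$, measurable in $y$ and a probability measure in its first argument, with $\mu(A\times B)=\int_B\eta(y,A)\,\nu(dy)$ for all Borel $A\subseteq S_1$ and $B\subseteq S_2$. A monotone-class argument then upgrades this identity on product sets to the stated factorization $\mu(dx\times dy)=\eta(y,dx)\nu(dy)$ on all of $S_1\times S_2$.

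For part (b), the idea is to manufacture the randomization from a single uniform variable using that a Polish space is Borel isomorphic to a Borel subset of $[0,1]$. Fix such a Borel isomorphism $\phi:S_1\to E\subseteq[0,1]$ and push the kernel forward, setting $\eta_\phi(y,\cdot)=\eta(y,\phi^{-1}(\cdot))$, a transition function into $[0,1]$. For each $y$ form the conditional distribution function $H(y,r)=\eta_\phi(y,[0,r])$ and its generalized inverse $q(y,u)=\inf\{r:H(y,r)\geq u\}$. The classical quantile construction shows that, for fixed $y$, if $\xi$ is uniform on $[0,1]$ then $q(y,\xi)$ has law $\eta_\phi(y,\cdot)$, which is supported on $E$, so $\phi^{-1}(q(y,\xi))$ is well defined almost surely (extend $\phi^{-1}$ by a fixed point off $E$ to make it Borel everywhere). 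Setting $G(y,u)=\phi^{-1}(q(y,u))$ then yields a Borel map with the required property: conditioning on $Y=y$ gives $G(y,\xi)$ the law $\eta(y,\cdot)$, and independence of $\xi$ and $Y$ together with part (a) gives $\mu_{G(Y,\xi),Y}=\int\delta_y\otimes\eta(y,\cdot)\,\nu(dy)=\mu$.

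For part (c), both implications rest on the essential uniqueness of the disintegration in part (a). If $\mu$ is a strong solution, then $X=F(Y)$ a.s.\ for some Borel $F$, so for $\nu$-a.e.\ $y$ the conditional law of $X$ given $Y=y$ is $\delta_{F(y)}$, and uniqueness forces $\eta(y,dx)=\delta_{F(y)}(dx)$. Conversely, if $\eta(y,dx)=\delta_{F(y)}(dx)$, I would first observe that $F$ is automatically Borel: for each Borel $A\subseteq S_1$ one has $\{y:F(y)\in A\}=\{y:\eta(y,A)=1\}$, which is measurable since $y\mapsto\eta(y,A)$ is. Then in the construction of part (b) the kernel is a point mass, so $G(y,u)=F(y)$ independent of $u$ works, giving $X=G(Y,\xi)=F(Y)$ a.s., so that $\mu$ is a strong solution.

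The step I expect to be the main obstacle is the joint measurability in part (b): the appeals to disintegration on Polish spaces and to uniqueness are routine, but verifying that the generalized inverse $q(y,u)$ is jointly Borel is where the real work lies. This follows because $H$ is jointly measurable in $(y,r)$ and, up to the usual right-continuity adjustments, $\{(y,u):q(y,u)\leq r\}=\{(y,u):u\leq H(y,r)\}$ is measurable; one must also confirm that $\phi$ and its fixed-point extension of $\phi^{-1}$ preserve measurability under the pushforward, which they do since Borel isomorphisms carry Borel structure in both directions.
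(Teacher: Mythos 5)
Your proof is correct, and it is worth noting how it relates to the paper's, which is essentially proof by citation: part (a) is dismissed as the standard disintegration theorem, part (b) is delegated entirely to the construction in \cite{BD83}, and part (c) is declared immediate. What you do differently is supply a self-contained argument for the crux, part (b), via the textbook randomization (transfer) lemma: compose a Borel isomorphism $\phi:S_1\to E\subset [0,1]$ with the quantile transform of the pushed-forward kernel, establishing joint measurability of $(y,u)\mapsto q(y,u)$ through the relation $\{(y,u):q(y,u)\leq r\}=\{(y,u):u\leq H(y,r)\}$ and right-continuity of $H(y,\cdot)$; the supporting facts you invoke (the Borel isomorphism theorem, Borelness of $E$ and of $\phi(A)$, concentration of $\eta_\phi(y,\cdot)$ on $E$ so the fixed-point extension of $\phi^{-1}$ is harmless, essential uniqueness of disintegrations for part (c)) are all used correctly. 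The trade-off is this: your construction delivers exactly what the lemma asserts, namely measurability of $G$ in $(y,u)$, and has the virtue of being elementary and complete; the Blackwell--Dubins construction cited by the paper is a stronger tool, producing a single universal map $\Phi(\mu,u)$ that is measurable, and suitably continuous, in the measure argument $\mu$ as well, which is why the paper calls it ``particularly nice,'' but none of that extra regularity is needed here. Your treatment of part (c) is also slightly more careful than ``immediate'': observing that $F$ is automatically Borel because $\{y:F(y)\in A\}=\{y:\eta (y,A)=1\}$ is a genuine (if small) point the paper skips, with the usual caveat that the identity $\eta (y,dx)=\delta_{F(y)}(dx)$ holds only $\nu$-a.e., so $F$ may need to be redefined on a $\nu$-null set.
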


\begin{proof}
Statement (a) is a standard result on the 
disintegration of measures.  A particularly nice 
construction that gives the desired $G$ in Statement (b) can 
be found in \cite{BD83}.  Statement (c) is immediate.
\end{proof}

We have the following notions of uniqueness.

\begin{definition}
{\em Pointwise (pathwise for stochastic processes) uniqueness }
holds, if $X_1$, $X_2$, and $Y$ defined on the same probability 
space with $\mu_{X_1,Y},\mu_{X_2,Y}\in {\cal S}_{\Gamma ,\nu}$ implies $
X_1=X_2$ a.s.

{\em Joint uniqueness in law\/} (or {\em weak joint uniqueness\/}) holds, 
 if ${\cal S}_{\Gamma ,\nu}$ contains at most one measure.  

{\em Uniqueness in law\/} (or {\em weak uniqueness\/}) holds if all 
$\mu\in {\cal S}_{\Gamma ,\nu}$ have the same marginal distribution on $
S_1$.
\end{definition}

We have the following generalization of the theorems of 
\cite{YW71} and \cite{Eng91}.

\begin{theorem}\label{gyw}
The following are equivalent:
\begin{itemize}

\item[a)]${\cal S}_{\Gamma ,\nu}\neq\emptyset$, and pointwise uniqueness holds. 

\item[b)]There exists a strong solution, and joint 
uniqueness in law holds.
\end{itemize}
\end{theorem}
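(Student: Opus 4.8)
The plan is to prove the two implications separately, using Lemma \ref{sol} to pass between the distributional description of a solution and the functional form $X=F(Y)$. The implication (b) $\Rightarrow$ (a) is the easy half. Existence of a strong solution already gives ${\cal S}_{\Gamma,\nu}\neq\emptyset$, so only pointwise uniqueness must be checked. If $X_1,X_2,Y$ sit on one probability space with $\mu_{X_1,Y},\mu_{X_2,Y}\in{\cal S}_{\Gamma,\nu}$, then joint uniqueness in law forces $\mu_{X_1,Y}=\mu_{X_2,Y}=\mu$, the unique element of ${\cal S}_{\Gamma,\nu}$. Since being a strong solution is a distributional property, this $\mu$ corresponds to a strong solution, so by Lemma \ref{sol}(c) its disintegration satisfies $\eta(y,dx)=\delta_{F(y)}(dx)$; hence $X_1=F(Y)=X_2$ a.s., which is pointwise uniqueness.

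For the substantive direction (a) $\Rightarrow$ (b), the main device is a conditionally independent coupling. Given $\mu_1,\mu_2\in{\cal S}_{\Gamma,\nu}$, I would disintegrate each as $\mu_i(dx\times dy)=\eta_i(y,dx)\nu(dy)$ using Lemma \ref{sol}(a) and form the probability measure $\eta_1(y,dx_1)\eta_2(y,dx_2)\nu(dy)$ on $S_1\times S_1\times S_2$, letting $(X_1,X_2,Y)$ be the coordinate maps. Integrating out the unused $S_1$-factor shows $(X_i,Y)$ has law $\mu_i$, so both $\mu_{X_1,Y}$ and $\mu_{X_2,Y}$ belong to ${\cal S}_{\Gamma,\nu}$; this transfer to the enlarged space is automatic precisely because membership in ${\cal S}_{\Gamma,\nu}$ depends only on the joint law. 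Pointwise uniqueness then gives $X_1=X_2$ a.s., and since $(X_1,Y)\sim\mu_1$ and $(X_2,Y)\sim\mu_2$, equality of the coupled outputs forces $\mu_1=\mu_2$, which is joint uniqueness in law.

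It remains to produce a strong solution. Since ${\cal S}_{\Gamma,\nu}$ is now a singleton $\{\mu\}$, I would run the same coupling with $\mu_1=\mu_2=\mu$, so that $X_1,X_2$ are conditionally i.i.d. given $Y$ with common conditional law $\eta(Y,\cdot)$, and again invoke pointwise uniqueness to get $X_1=X_2$ a.s. The crux is the elementary but essential fact that two conditionally i.i.d. copies agree almost surely exactly when the conditional law is almost surely a point mass: conditioning on $Y=y$ gives $P(X_1=X_2\mid Y=y)=\sum_a\eta(y,\{a\})^2$, the sum running over the atoms $a$ of $\eta(y,\cdot)$, and the bound $\sum_a\eta(y,\{a\})^2\le\sum_a\eta(y,\{a\})\le 1$ holds with equality if and only if $\eta(y,\cdot)=\delta_{F(y)}$. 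Hence $\eta(y,dx)=\delta_{F(y)}(dx)$ for $\nu$-a.e. $y$, and measurability of $F$ follows from $\{y:F(y)\in B\}=\{y:\eta(y,B)=1\}$ being measurable because $\eta$ is a transition function. By Lemma \ref{sol}(c) the unique $\mu$ is a strong solution, and taking any $Y\sim\nu$ and setting $X=F(Y)$ realizes it.

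I expect the atomic-concentration step --- identifying a.s. equality of conditionally i.i.d. copies with a Dirac conditional law, together with the measurable choice of $F$ --- to be the only genuine obstacle; the coupling itself and the two appeals to Lemma \ref{sol} are routine once one exploits that ${\cal S}_{\Gamma,\nu}$ is defined purely in terms of joint distributions.
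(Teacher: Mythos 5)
Your proof is correct, and its skeleton coincides with the paper's: both hinge on a coupling in which two solutions are conditionally independent given $Y$, and your (b) $\Rightarrow$ (a) half is the same argument the paper gives. The difference is in how (a) $\Rightarrow$ (b) is implemented. The paper builds the coupling via Lemma \ref{sol}(b), setting $X_i=G_i(Y,\xi_i)$ with $\xi_1,\xi_2$ independent uniforms independent of $Y$, and then outsources the crucial final step --- that a.s.\ equality of these two conditionally independent copies yields a Borel $F$ with $F(Y)=G_1(Y,\xi_1)=G_2(Y,\xi_2)$ a.s.\ --- to Lemma A.2 of \cite{Kur07}, which is not reproved in this paper. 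You instead realize the (distributionally identical) coupling directly as the kernel product $\eta_1(y,dx_1)\eta_2(y,dx_2)\nu (dy)$ on the canonical space $S_1\times S_1\times S_2$, and you prove the functional-dependence step from scratch: $P(X_1=X_2\mid Y=y)=\sum_a\eta (y,\{a\})^2\leq 1$, with equality forcing $\eta (y,\cdot )$ to be a point mass $\nu$-a.e., and measurability of $F$ follows from $\{y:F(y)\in B\}=\{y:\eta (y,B)=1\}$. What the paper's route buys is brevity and the $G_i(Y,\xi_i)$ machinery, which is reused later in the compatibility arguments (Lemmas \ref{uequiv} and \ref{compcpl} are phrased in terms of exactly these randomization functions); what your route buys is a fully self-contained argument that isolates precisely why conditional independence plus a.s.\ equality forces a Dirac conditional law. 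One point your write-up uses implicitly and should keep explicit: the diagonal $\{(x_1,x_2):x_1=x_2\}$ is Borel because $S_1$ is a separable metric space, which is what legitimizes the Fubini computation of $P(X_1=X_2\mid Y=y)$ and the measurability of the exceptional null set.
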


\begin{remark}
In the special case that all constraints are given by 
simple equations, for example,
\begin{equation}f_i(X,Y)=0\quad a.s.\quad i\in {\cal I},\label{simpeq}\end{equation}
then Proposition 2.10 of \cite{Kur07} shows that 
pointwise uniqueness, joint uniqueness in law, and 
uniqueness in law are equivalent.  Note that stochastic 
differential equations are not of the form (\ref{simpeq}) 
(see Section \ref{comp}) since (\ref{simpeq}) does not 
involve any adaptedness requirements.  Consequently, 
the equivalence of 
uniqueness in law and joint uniqueness in law does not 
follow from this proposition in that setting; however, 
\cite{Che03} has shown the equivalence of uniqueness in 
law and joint uniqueness in law for It\^o 
equations for diffusion processes.
\end{remark}

\begin{proof}
Assume (a).  If $\mu_1,\mu_2\in {\cal S}_{\Gamma ,\nu}$, then there exist Borel 
measurable functions $G_1(y,u)$ and 
$G_2(y,u)$ on $S_2\times [0,1]$ such that for $Y$ with distribution $
\nu$ and $\xi_1,\xi_2$ 
uniform on $[0,1]$, all independent, $(G_1(Y,\xi_1),Y)$ has 
distribution $\mu_1$ and $(G_2(Y,\xi_2),Y)$ has distribution $\mu_
2$.
By pointwise uniqueness,
\[G_1(Y,\xi_1)=G_2(Y,\xi_2)\quad a.s.\]
From the independence of $\xi_1$ and $\xi_2$, it follows that there 
exists a Borel measurable $F$ on $S_2$ such that 
$F(Y)=G_1(Y,\xi_1)=G_2(Y,\xi_2)$ a.s.  (See Lemma A.2 of 
\cite{Kur07}.)  

Assume (b). Suppose 
  $X_1$, $X_2$, $Y$ are 
defined on the same probability space and 
$\mu_{X_1,Y},\mu_{X_2,Y}\in {\cal S}_{\Gamma ,\nu}$.   
By Lemma \ref{sol}, the unique $\mu\in {\cal S}_{\Gamma ,\nu}$ must satisfy 
$\mu (dx\times dy)=\delta_{F(y)}(dx)\nu (dy)$, so $X_1=F(Y)=X_2$ almost 
surely giving pointwise uniqueness.
\end{proof}

The main result in \cite{Kur07}, Theorem 3.14, was 
stated assuming the compatibility condition to be discussed in the 
next section and under the assumption that ${\cal S}_{\Gamma ,\nu}$ was 
convex.  Neither assumption is needed for Theorem 
\ref{gyw}.  The compatibility 
condition is critical to showing that Theorem \ref{gyw}
implies the classical Yamada-Watanabe result as well as 
a variety of more recent results for other kinds of 
stochastic equations.  (See \cite{Kur07} for references.)  
The convexity assumption is useful in giving the 
following additional result.

\begin{corollary}\label{evstrong} 
Suppose ${\cal S}_{\Gamma ,\nu}$ is nonempty and convex.  Then every 
solution is a strong solution if and only if pointwise 
uniqueness holds.  
\end{corollary}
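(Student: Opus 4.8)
The plan is to establish the two implications separately. Nonemptiness of $\mathcal{S}_{\Gamma,\nu}$ is assumed throughout, and I expect convexity to be needed only for the implication ``every solution strong $\Rightarrow$ pointwise uniqueness,'' the reverse implication following directly from Theorem \ref{gyw}.

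First I would show that pointwise uniqueness forces every solution to be strong. Since $\mathcal{S}_{\Gamma,\nu}\neq\emptyset$ and pointwise uniqueness holds, Theorem \ref{gyw} supplies a strong solution and joint uniqueness in law, so $\mathcal{S}_{\Gamma,\nu}$ is a single measure $\mu$. Being the strong solution, $\mu$ disintegrates as $\mu(dx\times dy)=\delta_{F(y)}(dx)\nu(dy)$ by Lemma \ref{sol}(c). Any solution $(X,Y)$ has $\mu_{X,Y}=\mu$, and this form of $\mu$ forces $X=F(Y)$ a.s., so it too is strong. Convexity is not used here.

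For the converse I would argue directly, using convexity to build a mixed solution whose conditional law is degenerate only when two given solutions coincide. Suppose every solution is strong and let $X_1,X_2,Y$ be defined on one probability space with $\mu_{X_1,Y},\mu_{X_2,Y}\in\mathcal{S}_{\Gamma,\nu}$. By hypothesis each is strong, so there are Borel $F_1,F_2$ with $X_i=F_i(Y)$ a.s.\ and $\mu_{X_i,Y}(dx\times dy)=\delta_{F_i(y)}(dx)\nu(dy)$. Then $\mu=\frac12\mu_{X_1,Y}+\frac12\mu_{X_2,Y}$ lies in $\mathcal{S}_{\Gamma,\nu}$ by convexity, and its disintegration against $\nu$ is $\eta(y,dx)=\frac12\bigl(\delta_{F_1(y)}+\delta_{F_2(y)}\bigr)(dx)$, which is a single point mass exactly when $F_1(y)=F_2(y)$. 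Since $\mu$ is a solution measure, Lemma \ref{sol}(b) yields a solution with distribution $\mu$, which must be strong by hypothesis; so Lemma \ref{sol}(c) requires $\eta(y,\cdot)$ to be a point mass for $\nu$-a.e.\ $y$. This forces $F_1=F_2$ $\nu$-a.e., whence $X_1=F_1(Y)=F_2(Y)=X_2$ a.s., giving pointwise uniqueness.

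The one step needing care, and the only place convexity enters, is the passage from the mixed measure $\mu$ back to an honest non-strong solution: I must invoke the essential uniqueness of the disintegration so that the computed two-point kernel is genuinely the conditional law of $\mu$, and then read off via Lemma \ref{sol}(c) that $\mu$ fails to be strong on the set where $F_1\neq F_2$. Once that identification is secured, the contradiction with ``every solution is strong'' is immediate and the remaining bookkeeping is routine.
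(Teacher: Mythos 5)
Your proof is correct, and both implications are sound. The forward direction coincides with the paper's: Theorem \ref{gyw} collapses $\mathcal{S}_{\Gamma,\nu}$ to a single measure whose disintegration is a point-mass kernel, so every solution is strong. In the converse direction you share the paper's key move --- convexity puts the mixture $\mu_0=\frac{1}{2}\mu_{X_1,Y}+\frac{1}{2}\mu_{X_2,Y}$ into $\mathcal{S}_{\Gamma,\nu}$, and the hypothesis that every solution is strong is then applied to $\mu_0$ --- but you finish by a different mechanism. The paper realizes $\mu_0$ concretely as $X=F_1(Y)$ on $\{\xi>1/2\}$ and $X=F_2(Y)$ on $\{\xi\le 1/2\}$ for a fair coin $\xi$ independent of $Y$, obtains $X=F(Y)$ a.s.\ from the hypothesis, and then uses the independence of $\xi$ and $Y$ to split that identity into $F_1(Y)=F(Y)=F_2(Y)$ a.s. You instead work at the level of kernels: you compute the disintegration $\eta(y,dx)=\frac{1}{2}\bigl(\delta_{F_1(y)}+\delta_{F_2(y)}\bigr)(dx)$ of $\mu_0$, realize $\mu_0$ as an honest solution via Lemma \ref{sol}(b), and invoke Lemma \ref{sol}(c) together with the $\nu$-a.e.\ uniqueness of disintegrations (valid here since $S_1$ and $S_2$ are Polish) to force $\eta(y,\cdot)$ to be a point mass $\nu$-a.e., hence $F_1=F_2$ $\nu$-a.e. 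The two finishes are equally rigorous: the paper's is slightly more self-contained, needing only independence and no appeal to uniqueness of disintegration, while yours makes the role of Lemma \ref{sol}(c) explicit and, by starting directly from $X_1,X_2,Y$ on a common space, also spells out the small step (left implicit in the paper) that passes from arbitrary solutions on a common space to the representing functions $F_1,F_2$.
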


\begin{proof}
By Theorem \ref{gyw}, pointwise uniqueness  
implies ${\cal S}_{\Gamma ,\nu}$ contains only one distribution and the 
corresponding solution is strong.  Conversely, suppose 
every solution is a strong solution.
If $\mu_1,\mu_2\in {\cal S}_{\Gamma ,\nu}$, then $\mu_0=\frac 12\mu_
1+\frac 12\mu_2\in {\cal S}_{\Gamma ,\nu}$.  Let $Y$  have
distribution $\nu$.  Then there exist Borel Functions $F_1$ and 
$F_2$ such that $(F_1(Y),Y)$ has distribution $\mu_1$ and $(F_2(Y
),Y)$ 
has distribution $\mu_2$.  Let 
 $\xi$ be uniformly distributed on $[0,1]$ and 
independent of $Y$. Define
\[X=\left\{\begin{array}{cl}
F_1(Y)&\quad\xi >1/2\\
F_2(Y)&\quad\xi\leq 1/2.\end{array}
\right.\]
Then $(X,Y)$ has distribution $\mu_0$ and must satisfy $X=F(Y)$ 
a.s. 
for some $F$.  Since $\xi$ is independent 
of $Y$, we must have $F_1(Y)=F(Y)=F_2(Y)$ $a.s.,$ giving 
pointwise uniqueness.
\end{proof}

\setcounter{equation}{0} 
\section{Compatibility}\label{comp} It is not immediately 
obvious that Theorem \ref{gyw} gives the classical 
Yamada-Watanabe theorem since proofs of pathwise 
uniqueness require appropriate adaptedness conditions in 
order to compare two solutions.  This leads us to 
introduce the notion of compatibility.  In what follows, 
if $S$ is a metric space, then ${\cal B}(S)$ will denote the Borel 
$\sigma$-algebra and $B(S)$ will denote the space of bounded, Borel 
measurable functions; if ${\cal M}$ is a $\sigma$-algebra, $B({\cal M}
)$ will 
denote the space of bounded, ${\cal M}$-measurable functions.

Let $E_1$ and $E_2$ be complete, separable metric spaces, and 
let $D_{E_i}[0,\infty )$, be the Skorohod space of cadlag $E_i$-valued 
functions.  Let $Y$ be a process in $D_{E_2}[0,\infty )$.  By ${\cal F}_
t^Y$, we 
mean the completion of $\sigma (Y(s),s\leq t)$.  

\begin{definition}\label{cptdef1}
A process $X$ in $D_{E_1}[0,\infty )$ is 
{\em temporally compatible\/} with $Y$ if for each $t\geq 0$ and $h\in B(D_{
E_2}[0,\infty ))$,
\begin{equation}E[h(Y)|{\cal F}^{X,Y}_t]=E[h(Y)|{\cal F}^Y_t]\label{compid}\end{equation}
where $\{{\cal F}^{X,Y}_t\}$ denotes the complete filtration generated by  $
(X,Y)$ 
and $\{{\cal F}_t^Y\}$ denotes the complete filtration generated by $
Y$.
\end{definition}

This definition is essentially (4.5) of \cite{Jac80} which 
is basic to the statement of Theorem 8.3 of that paper 
which gives a version of the Yamada-Watanabe theorem 
for general stochastic differential equations driven by 
semimartingales.
If $Y$ has independent increments, then $X$ is compatible 
with $Y$ if $Y(t+~\cdot )-Y(t)$ is independent of ${\cal F}_t^{X,
Y}$ for all 
$t\geq 0$. (See Lemma \ref{indcnd}\ below.)

We will consider a more general notion of compatibility. 
 If ${\cal B}_{\alpha}^{S_1}$ is a 
sub-$\sigma$-algebra of  ${\cal B}(S_1)$ and $X$ is an $S_1$-valued random 
variable on a complete probability space $(\Omega ,{\cal F},P)$, 
then ${\cal F}_{\alpha}^X\equiv\mbox{\rm the completion of }\{\{X
\in D\}:D\in {\cal B}_{\alpha}^{S_1}\}$ is the 
complete, sub-$\sigma$-algebra of ${\cal F}$ 
generated by $\{h(X):h\in B({\cal B}_{\alpha}^{S_1})\}$.  ${\cal F}_{
\alpha}^Y$ is defined similarly 
for a sub-$\sigma$-algebra ${\cal B}_{\alpha}^{S_2}\subset {\cal B}
(S_2)$.

\begin{definition}\label{cptdef2}
Let ${\cal A}$ be an index set, and for each $\alpha\in {\cal A}$, let $
{\cal B}_{\alpha}^{S_1}$ be a 
sub-$\sigma$-algebra of ${\cal B}(S_1)$ and ${\cal B}_{\alpha}^{S_
2}$ be a sub-$\sigma$-algebra of 
${\cal B}(S_2)$.  
The collection ${\cal C}\equiv \{({\cal B}_{\alpha}^{S_1},{\cal B}_{
\alpha}^{S_2}):\alpha\in {\cal A}\}$ will be referred to as a 
{\em compatibility structure}.

Let $Y$ be an $S_2$-valued random variable.
An $S_1$-valued random variable $X$ is ${\cal C}$-{\em compatible\/} with $
Y$ if 
for each $\alpha\in {\cal A}$ and each $h\in B(S_2)$ (or equivalently, each 
$h\in L^1(\nu )$),
\begin{equation}E[h(Y)|{\cal F}^X_{\alpha}\vee {\cal F}_{\alpha}^
Y]=E[h(Y)|{\cal F}^Y_{\alpha}]\label{compid2}\end{equation}
\end{definition}

\begin{remark}
Temporal compatibility, as defined above,
 is a special case of compatibility, and we will reserve 
this terminology for the case in which $\{{\cal F}_t^X\}$ and $\{
{\cal F}_t^Y\}$ are 
the complete filtrations generated by $X$ and $Y$.  Of 
course, in this setting ${\cal F}_t^{X,Y}={\cal F}_t^X\vee {\cal F}_
t^Y$.  

Compatibility 
conditions do arise that have index set ${\cal A}=[0,\infty )$ but which 
are not temporal compatibility.  For example, for a
time-change equation
\[X(t)=Y(\int_0^t\beta (X(s))ds),\]
the natural compatibility condition sets
\[{\cal F}_{\alpha}^Y=\mbox{\rm \ the completion of }\sigma (Y(u)
:0\leq u\leq\alpha )\]
but takes
\[{\cal F}_{\alpha}^X=\mbox{\rm \ the completion of }\sigma (\{\int_
0^t\beta (X(s))ds\leq r\}:r\leq\alpha ,t\geq 0).\]
In this case, compatibility ensures that $\tau (t)=\int_0^t\beta 
(X(s))ds$ 
is a stopping time with respect to the filtration 
$\{{\cal F}_{\alpha}^X\vee {\cal F}_{\alpha}^Y,\alpha\geq 0\}$.
\end{remark}

\begin{lemma}\label{indcnd}
Suppose that for each $\alpha\in {\cal A}$ there exist random variables 
$(Y_{\alpha},Y^{\alpha})$ with values in some measurable space $R_{
\alpha}\times R^{\alpha}$ such that 
$\sigma (Y)=\sigma (Y_{\alpha},Y^{\alpha})$, $Y_{\alpha}$ is ${\cal F}_{
\alpha}^Y$-measurable, and $Y^{\alpha}$ is 
independent of ${\cal F}^X_{\alpha}\vee {\cal F}_{\alpha}^Y$.  Then $
X$ is compatible with $Y$
\end{lemma}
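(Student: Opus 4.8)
The plan is to collapse both conditional expectations in (\ref{compid2}) to the same function of $Y_{\alpha}$ by means of the elementary independence (``freezing'') lemma for conditional expectations. First I would fix $\alpha\in{\cal A}$ and $h\in B(S_2)$. Because $\sigma(Y)=\sigma(Y_{\alpha},Y^{\alpha})$, the bounded random variable $h(Y)$ is $\sigma(Y_{\alpha},Y^{\alpha})$-measurable, so the Doob--Dynkin functional representation provides a bounded measurable function $g$ on $R_{\alpha}\times R^{\alpha}$ with $h(Y)=g(Y_{\alpha},Y^{\alpha})$ a.s.

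The core of the argument is two applications of the freezing lemma. Taking the conditioning $\sigma$-algebra to be ${\cal F}^X_{\alpha}\vee{\cal F}_{\alpha}^Y$: by hypothesis $Y_{\alpha}$ is ${\cal F}_{\alpha}^Y$-measurable, hence measurable with respect to this larger $\sigma$-algebra, while $Y^{\alpha}$ is independent of it. The freezing lemma then yields
\[E[h(Y)|{\cal F}^X_{\alpha}\vee{\cal F}_{\alpha}^Y]=\phi(Y_{\alpha}),\qquad \phi(u):=E[g(u,Y^{\alpha})].\]
Repeating the computation with the smaller $\sigma$-algebra ${\cal F}_{\alpha}^Y$ is what closes the argument: since $Y^{\alpha}$ is independent of the full $\sigma$-algebra ${\cal F}^X_{\alpha}\vee{\cal F}_{\alpha}^Y$, it is a fortiori independent of ${\cal F}_{\alpha}^Y$, and $Y_{\alpha}$ remains ${\cal F}_{\alpha}^Y$-measurable, so the same lemma gives $E[h(Y)|{\cal F}_{\alpha}^Y]=\phi(Y_{\alpha})$ with the identical $\phi$. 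Comparing the two displays gives (\ref{compid2}), which is exactly ${\cal C}$-compatibility.

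I expect the only real subtlety to lie in justifying the freezing lemma over the abstract measurable spaces $R_{\alpha}$ and $R^{\alpha}$: one must verify that $\phi(u)=E[g(u,Y^{\alpha})]$ is well defined and measurable in $u$, which is a Fubini-type argument using the law of $Y^{\alpha}$, and that the functional representation $h(Y)=g(Y_{\alpha},Y^{\alpha})$ is genuinely available from $\sigma(Y)=\sigma(Y_{\alpha},Y^{\alpha})$. Both are routine. The genuinely load-bearing hypothesis is that $Y^{\alpha}$ be independent of the \emph{entire} $\sigma$-algebra ${\cal F}^X_{\alpha}\vee{\cal F}_{\alpha}^Y$ rather than merely of ${\cal F}_{\alpha}^Y$; this is precisely what forces both conditional expectations down to the same $\phi(Y_{\alpha})$ and thereby delivers the equality.
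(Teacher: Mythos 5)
Your proof is correct and takes essentially the same route as the paper's: the paper also writes $h(Y)=h_{\alpha}(Y_{\alpha},Y^{\alpha})$ via Doob--Dynkin and then replaces the conditional expectation given ${\cal F}_{\alpha}^X\vee{\cal F}_{\alpha}^Y$ by $\int_{R^{\alpha}}h_{\alpha}(Y_{\alpha},y)\mu_{Y^{\alpha}}(dy)$, which is precisely your $\phi(Y_{\alpha})$, and identifies it with $E[h(Y)|{\cal F}_{\alpha}^Y]$. Your two applications of the freezing lemma correspond exactly to the middle equalities in the paper's displayed chain, so there is nothing to correct.
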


\begin{proof}
If $h\in B(S_2)$, then there exist $h_{\alpha}\in B(R_{\alpha}\times 
R^{\alpha})$ such that 
$h(Y)=h_{\alpha}(Y_{\alpha},Y^{\alpha})$ a.s.  Then
\begin{eqnarray*}
E[h(Y)|{\cal F}_{\alpha}^X\vee {\cal F}_{\alpha}^Y]&=&E[h_{\alpha}
(Y_{\alpha},Y^{\alpha})|{\cal F}_{\alpha}^X\vee {\cal F}_{\alpha}^
Y]\\
&=&E[\int_{R^{\alpha}}h_{\alpha}(Y_{\alpha},y)\mu_{Y^{\alpha}}(dy
)|{\cal F}_{\alpha}^X\vee {\cal F}_{\alpha}^Y]\\
&=&\int_{R^{\alpha}}h_{\alpha}(Y_{\alpha},y)\mu_{Y^{\alpha}}(dy)\\
&=&E[\int_{R^{\alpha}}h_{\alpha}(Y_{\alpha},y)\mu_{Y^{\alpha}}(dy
)|{\cal F}_{\alpha}^Y]\end{eqnarray*}
\end{proof}

In the temporal setting, \citet*{BER05} employ a 
a condition that requires every $\{{\cal F}_t^Y\}$-martingale to be a 
$\{{\cal F}_t^{X,Y}\}$-martingale.  More generally,  $\{{\cal F}_{
\alpha}^Y,\alpha\in {\cal A}\}$ is a {\em filtration\/} if
 ${\cal A}$ is partially ordered and $\alpha_1\prec\alpha_2$ implies $
{\cal F}^Y_{\alpha_1}\subset {\cal F}_{\alpha_2}^Y$.
We consider the following condition.

\begin{condition}\label{mgcnd}\ $\{{\cal F}_{\alpha}^Y,\alpha\in 
{\cal A}\}$ and $\{{\cal F}_{\alpha}^X,\alpha\in {\cal A}\}$ are 
filtrations and every $\{{\cal F}_{\alpha}^Y\}$-martingale is a $
\{{\cal F}_{\alpha}^Y\vee {\cal F}_{\alpha}^X\}$ 
martingale.
\end{condition}

\begin{lemma}\label{mglem} 
If $\{{\cal F}_{\alpha}^Y,\alpha\in {\cal A}\}$ and $\{{\cal F}_{
\alpha}^X,\alpha\in {\cal A}\}$ are filtrations, then 
${\cal C}$-compatibility implies Condition \ref{mgcnd}.  
\end{lemma}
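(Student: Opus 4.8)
The plan is to fix an arbitrary $\{{\cal F}_{\alpha}^Y\}$-martingale $M=\{M_{\alpha}\}$ and verify directly that it is a martingale relative to the enlarged family $\{{\cal F}_{\alpha}^Y\vee {\cal F}_{\alpha}^X\}$. Two of the three requirements are immediate: $M_{\alpha}$ is ${\cal F}_{\alpha}^Y$-measurable, hence ${\cal F}_{\alpha}^Y\vee {\cal F}_{\alpha}^X$-measurable, and it is integrable by hypothesis. The filtration hypotheses on $\{{\cal F}_{\alpha}^X\}$ and $\{{\cal F}_{\alpha}^Y\}$ are used only to guarantee that $\{{\cal F}_{\alpha}^Y\vee {\cal F}_{\alpha}^X\}$ is itself a filtration, so that the notion of a martingale relative to it is meaningful. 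Thus everything reduces to showing that for every $\alpha_1\prec\alpha_2$ in ${\cal A}$,
\[E[M_{\alpha_2}|{\cal F}_{\alpha_1}^Y\vee {\cal F}_{\alpha_1}^X]=M_{\alpha_1}\quad a.s.\]

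The key step is to recognize $M_{\alpha_2}$ as a function of $Y$ and then feed it into the compatibility identity. Since $M_{\alpha_2}$ is ${\cal F}_{\alpha_2}^Y$-measurable, and ${\cal F}_{\alpha_2}^Y$ is generated by $Y$ (through the $\sigma$-algebra ${\cal B}_{\alpha_2}^{S_2}$), the Doob--Dynkin lemma together with completeness of the probability space yields an $h$ with $M_{\alpha_2}=h(Y)$ a.s.; moreover $h\in L^1(\nu)$ because $\int|h|\,d\nu=E|M_{\alpha_2}|<\infty$. I would then apply the compatibility relation (\ref{compid2}) at the index $\alpha_1$ to this $h$ and use the martingale property of $M$ for the $Y$-filtration to compute
\[E[M_{\alpha_2}|{\cal F}_{\alpha_1}^X\vee {\cal F}_{\alpha_1}^Y]=E[h(Y)|{\cal F}_{\alpha_1}^X\vee {\cal F}_{\alpha_1}^Y]=E[h(Y)|{\cal F}_{\alpha_1}^Y]=E[M_{\alpha_2}|{\cal F}_{\alpha_1}^Y]=M_{\alpha_1},\]
which is precisely the identity sought.

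The main obstacle is a matter of integrability rather than of depth: the identity (\ref{compid2}) is phrased for bounded $h\in B(S_2)$, whereas the representing function of $M_{\alpha_2}$ is only in $L^1(\nu)$. I therefore need the equivalent $L^1(\nu)$ formulation already flagged in Definition \ref{cptdef2}, and I would recall its justification by a standard approximation argument: for either conditioning $\sigma$-algebra the map $h\mapsto E[h(Y)|\cdot]$ is an $L^1$-contraction, so the set of $h\in L^1(\nu)$ for which the two conditional expectations coincide is closed in $L^1(\nu)$; since it contains $B(S_2)$, which is dense in $L^1(\nu)$ (by truncation and dominated convergence), it is all of $L^1(\nu)$. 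Once this extension is in hand, the displayed chain of equalities goes through verbatim and the proof is complete.
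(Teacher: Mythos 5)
Your proof is correct and takes essentially the same approach as the paper's: represent $M_{\alpha_2}=h(Y)$ a.s.\ with $h\in L^1(\nu )$ via Doob--Dynkin and completeness, apply the compatibility identity at index $\alpha_1$, and finish with the martingale property for $\{{\cal F}_{\alpha}^Y\}$. The only difference is one of detail: the paper simply invokes the parenthetical equivalence ``each $h\in B(S_2)$ (or equivalently, each $h\in L^1(\nu )$)'' stated in Definition \ref{cptdef2}, whereas you justify that $L^1$ extension explicitly by the contraction-plus-density argument.
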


\begin{remark}
The earlier paper (\cite{Kur07}) and the 
original version of the current paper casually claimed 
equivalence of the martingale condition and compatibility.  
A referee has pointed out that the claim 
was not only casual, but false.  Condition \ref{mgcnd}\ 
gives an example of what we will call {\em partial }
{\em compatibility conditions}, that is, (\ref{compid2}) holds for 
a subset of $h\in L^1(\nu )$.  Partial compatibility conditions 
will be discussed further in Section \ref{partcmp}.
\end{remark}

\begin{proof}
Let $\{M(\alpha ),\alpha\in {\cal A}\}$ be a $\{{\cal F}_{\alpha}^
Y\}$-martingale.  For each $\alpha\in {\cal A}$, 
there exists a Borel function $h_{\alpha}$ such that 
$M(\alpha )=h_{\alpha}(Y)$ a.s.  Suppose $\alpha_1\prec\alpha_2$.  Then
\[E[M(\alpha_2)|{\cal F}^X_{\alpha_1}\vee {\cal F}_{\alpha_1}^Y]=
E[h_{\alpha_2}(Y)|{\cal F}^X_{\alpha_1}\vee {\cal F}_{\alpha_1}^Y
]=E[h_{\alpha_2}(Y)|{\cal F}_{\alpha_1}^Y]=M(\alpha_1).\]
\end{proof}

Note that (\ref{compid2}) is equivalent to requiring that 
for each $h\in B(S_2)$,
\begin{equation}\inf_{f\in B({\cal B}_{\alpha}^{S_1}\times {\cal B}_{
\alpha}^{S_2})}E[(h(Y)-f(X,Y))^2]=\inf_{f\in B({\cal B}_{\alpha}^{
S_2})}E[(h(Y)-f(Y))^2],\label{cmpid2}\end{equation}
so compatibility is a property of the joint 
distribution of $(X,Y)$. Consequently, compatibility is a 
constraint on joint distributions.  To emphasize the 
special role of compatibility, ${\cal S}_{\Gamma ,{\cal C},\nu}$ will denote the 
collection of joint distributions that satisfy the 
constraints in $\Gamma$ and the ${\cal C}$-compatibility constraint.

\begin{example}\label{smgsde}{\rm
Let $U$ be a process in $D_{{\Bbb R}^d}[0,\infty )$, $V$ an ${\Bbb R}^
m$-valued 
semimartingale with respect to the filtration $\{{\cal F}_t^{U,V}
\}$, and 
$H:D_{{\Bbb R}^d}[0,\infty )\rightarrow D_{{\Bbb M}^{d\times m}}[
0,\infty )$ (${\Bbb M}^{d\times m}$ the space of 
$d\times m$-dimensional matrices) be Borel measurable and 
satisfy $H(x,t)=H(x(\cdot\wedge t),t)$ for all $x\in D_{{\Bbb R}^
d}[0,\infty )$ and 
$t\geq 0$.  Then $X$ is defined to be a solution of 
\begin{equation}X(t)=U(t)+\int_0^tH(X,s-)dV(s)\label{smgsde1}\end{equation}
if $X$ is temporally compatible with $Y=(U,V)$ (ensuring 
that the stochastic integral exists) and 
\[\lim_{n\rightarrow\infty}E[1\wedge |X(t)-U(t)-\sum_kH(X,\frac k
n)(V(\frac {k+1}n\wedge t)-V(\frac kn\wedge t))|]=0,\quad t\geq 0
.\]
Note that this definition assumes more regularity than 
is necessary or is assumed in \cite{Jac80}.
}
\end{example}

To prove pointwise (pathwise) uniqueness, we still need 
 some way of comparing compatible solutions. 

\begin{definition}\label{jntcmp}
Let the random variables $X_1$, $X_2$, and $Y$ be defined on the 
same probability space with $X_1$ and $X_2$ $S_1$-valued and $Y$ 
$S_2$-valued.  $(X_1,X_2)$ are {\em jointly} ${\cal C}$-{\em compatible\/} with $
Y$ if 
\[E[h(Y)|{\cal F}^{X_1}_{\alpha}\vee {\cal F}^{X_2}_{\alpha}\vee 
{\cal F}^Y_{\alpha}]=E[h(Y)|{\cal F}_{\alpha}^Y],\quad\alpha\in {\cal A}
,h\in B(S_2).\]
(Note that if $(X_1,X_2)$ are jointly ${\cal C}$-compatible with $
Y$, 
then each of $X_1$ and $X_2$ is ${\cal C}$-compatible with $Y$.)

{\em Pointwise uniqueness for jointly} ${\cal C}$-{\em compatible solutions\/} holds 
if for every triple of processes $(X_1,X_2,Y)$ 
defined on the same probability space such that 
$\mu_{X_1,Y},\mu_{X_2,Y}\in {\cal S}_{\Gamma ,{\cal C},\nu}$ and $
(X_1,X_2)$ is jointly 
compatible with $Y$, $X_1=X_2$ a.s.
\end{definition}

With reference to Lemma \ref{indcnd}, uniqueness for 
jointly temporally compatible solutions is the usual kind 
of uniqueness considered for stochastic differential 
equations driven by Brownian motion, L\'evy processes, 
and/or Poisson random measures.  For example, let
$Y=(X(0),Z)$, where $Z$ is a L\'evy process. Consider the 
equation
\[X(t)=X(0)+\int_0^tH(X(s-))dZ(s),\]
where we require $X$ and $Z$  to be adapted to a filtration 
$\{{\cal F}_t\}$ such that $Z(t+\cdot )-Z(t)$ is independent of $
{\cal F}_t$, $t\geq 0$.  If 
there exist two such solutions with $X_1(0)=X_2(0)=X(0)$ 
adapted to $\{{\cal F}_t\}$, then since 
${\cal F}_t^{X_1}\vee {\cal F}_t^{X_2}\vee {\cal F}_t^Z\subset {\cal F}_
t$,  
\begin{eqnarray*}
&&E[h(Z(t+\cdot )-Z(t),Z(\cdot\wedge t))|{\cal F}_t^{X_1}\vee {\cal F}_
t^{X_2}\vee {\cal F}_t^Z]\\
&&\qquad = E[E[h(Z(t+\cdot )-Z(t),Z(\cdot\wedge 
t))|{\cal F}_t]|{\cal F}_t^{X_1}\vee {\cal F}_t^{X_2}\vee {\cal F}_
t^Z]\\
&&\qquad =E[\int h(z,Z(\cdot\wedge t))\mu_{Z(t+\cdot )-Z(t)}(dz)|{\cal F}_
t^{X_1}\vee {\cal F}_t^{X_2}\vee {\cal F}_t^Z]\\
&&\qquad = \int h(z,Z(\cdot\wedge t))\mu_{Z(t+\cdot )-Z(t)}(dz)\\
&&\qquad = E[\int h(z,Z(\cdot\wedge t))\mu_{Z(t+\cdot )-Z(t)}(dz)|{\cal F}_
t^Z\vee\sigma (X(0))],\end{eqnarray*}
which gives the joint compatibility of $X_1$ and $X_2$ with 
$(X(0),Z)$.

The following lemma ensures that pointwise uniqueness 
of jointly compatible solutions 
is equivalent to the notion of 
pointwise uniqueness used in Theorem \ref{gyw}\ and 
hence, for example, Theorem \ref{gyw}\ implies the 
classical Yamada-Watanabe theorem.

\begin{lemma}\label{uequiv}
Pointwise uniqueness for jointly ${\cal C}$-compatible solutions in 
${\cal S}_{\Gamma ,{\cal C},\nu}$ is equivalent to pointwise uniqueness in $
{\cal S}_{\Gamma ,{\cal C},\nu}$.
\end{lemma}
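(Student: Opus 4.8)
The plan is to dispose of one implication immediately and to devote the work to the other. Since joint $\mathcal{C}$-compatibility of $(X_1,X_2)$ with $Y$ is an extra hypothesis on the triple, pointwise uniqueness in $\mathcal{S}_{\Gamma,\mathcal{C},\nu}$ trivially implies pointwise uniqueness for jointly $\mathcal{C}$-compatible solutions: the triples tested in the latter form a subclass of those tested in the former. For the converse, suppose pointwise uniqueness for jointly $\mathcal{C}$-compatible solutions holds, and let $X_1,X_2,Y$ be defined on a common probability space with $\mu_1:=\mu_{X_1,Y}$ and $\mu_2:=\mu_{X_2,Y}$ both in $\mathcal{S}_{\Gamma,\mathcal{C},\nu}$; the obstruction is that $(X_1,X_2)$ need not be jointly compatible with $Y$. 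I would remove this obstruction by passing to the conditionally independent coupling. Using Lemma \ref{sol}(b), choose Borel maps $G_1,G_2:S_2\times[0,1]\to S_1$ so that, for $\tilde Y\sim\nu$ and $\xi_1,\xi_2$ uniform on $[0,1]$ with $\tilde Y,\xi_1,\xi_2$ mutually independent, the variables $\tilde X_i:=G_i(\tilde Y,\xi_i)$ satisfy $\mu_{\tilde X_i,\tilde Y}=\mu_i$ for $i=1,2$.

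Because the constraints in $\Gamma$ and, by (\ref{cmpid2}), the $\mathcal{C}$-compatibility constraint are properties of the two-variable joint law, $\mu_{\tilde X_i,\tilde Y}=\mu_i\in\mathcal{S}_{\Gamma,\mathcal{C},\nu}$ shows that each $\tilde X_i$ is again $\mathcal{C}$-compatible with $\tilde Y$. The crux is to verify that $(\tilde X_1,\tilde X_2)$ is \emph{jointly} $\mathcal{C}$-compatible with $\tilde Y$ in the sense of Definition \ref{jntcmp}; this is the step I expect to be the main obstacle. I would argue one level $\alpha$ at a time, adding the two solutions to the conditioning one at a time. Writing $\mathcal{D}=\mathcal{F}^{\tilde Y}_\alpha$ and $\mathcal{A}_i=\mathcal{F}^{\tilde X_i}_\alpha$, the key point is the conditional independence of $\mathcal{A}_2$ from $\sigma(\tilde Y)\vee\mathcal{A}_1$ given $\mathcal{D}$. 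To obtain it, note $\mathcal{A}_1\subseteq\sigma(\tilde Y,\xi_1)$, so $\sigma(\tilde Y)\vee\mathcal{A}_1\subseteq\sigma(\tilde Y,\xi_1)$, and by the decomposition property of conditional independence it suffices to show that $E[\psi(\tilde X_2)\mid\sigma(\tilde Y,\xi_1)]=E[\psi(\tilde X_2)\mid\mathcal{D}]$ for $\psi\in B(\mathcal{B}^{S_1}_\alpha)$. Since $\psi(\tilde X_2)$ is a function of $(\tilde Y,\xi_2)$ and $\xi_1$ is independent of $(\tilde Y,\xi_2)$, the left side collapses to $E[\psi(\tilde X_2)\mid\sigma(\tilde Y)]$, and individual $\mathcal{C}$-compatibility of $\tilde X_2$ then reduces $\sigma(\tilde Y)$ to $\mathcal{D}$, giving the claim. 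Consequently, for $h\in B(S_2)$ the variable $\mathcal{A}_2$ is irrelevant when conditioning the $(\sigma(\tilde Y)\vee\mathcal{A}_1)$-measurable quantity $h(\tilde Y)$, so $E[h(\tilde Y)\mid\mathcal{A}_1\vee\mathcal{A}_2\vee\mathcal{D}]=E[h(\tilde Y)\mid\mathcal{A}_1\vee\mathcal{D}]$, and a second application of individual compatibility, now for $\tilde X_1$, yields $E[h(\tilde Y)\mid\mathcal{A}_1\vee\mathcal{A}_2\vee\mathcal{D}]=E[h(\tilde Y)\mid\mathcal{D}]$, which is joint compatibility.

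With $(\tilde X_1,\tilde X_2)$ jointly $\mathcal{C}$-compatible with $\tilde Y$ and $\mu_{\tilde X_i,\tilde Y}\in\mathcal{S}_{\Gamma,\mathcal{C},\nu}$, the hypothesis gives $\tilde X_1=\tilde X_2$ a.s. Exactly as in the proof of Theorem \ref{gyw}, the independence of $\xi_1$ and $\xi_2$ (via Lemma A.2 of \cite{Kur07}) then produces a Borel $F:S_2\to S_1$ with $F(\tilde Y)=\tilde X_1=\tilde X_2$ a.s., so $\mu_1=\mu_{F(\tilde Y),\tilde Y}=\mu_2$, and by Lemma \ref{sol}(c) this common measure has disintegration $\delta_{F(y)}(dx)\nu(dy)$. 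Finally I would transfer back to the original space: since $\mu_{X_1,Y}=\mu_1=\delta_{F(y)}(dx)\nu(dy)$, the conditional law of $X_1$ given $Y$ is $\delta_{F(Y)}$, whence $X_1=F(Y)$ a.s., and likewise $X_2=F(Y)$ a.s. Therefore $X_1=X_2$ a.s., which is pointwise uniqueness in $\mathcal{S}_{\Gamma,\mathcal{C},\nu}$.
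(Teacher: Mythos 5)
Your proof is correct and follows essentially the same route as the paper: the paper likewise dismisses one implication as trivial and, for the converse, repeats the argument of Theorem \ref{gyw} after establishing that the conditionally independent coupling $(G_1(Y,\xi_1),G_2(Y,\xi_2))$ is jointly ${\cal C}$-compatible with $Y$, which is exactly its Lemma \ref{compcpl}. The joint-compatibility step you verify by hand rests on the same two ingredients the paper uses---independence of the randomizers $\xi_1,\xi_2$ and the dual characterization $E[g(X)|Y]=E[g(X)|{\cal F}_{\alpha}^Y]$ of compatibility (the paper's Lemma \ref{adcomp})---only phrased in conditional-independence language rather than as test-function computations.
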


Recall that for
 $\mu_1,\mu_2\in {\cal S}_{\Gamma ,{\cal C},\nu}$ and  $Y$, $\xi_
1$, and $\xi_2$  independent, $Y$ 
with distribution $\nu$  
and $\xi_1$ and $\xi_2$ uniform on $[0,1]$, there exist Borel 
measurable
$G_1:S_2\times [0,1]\rightarrow S_1$ and $G_2:S_2\times [0,1]\rightarrow 
S_1$ such that $(G_1(Y,\xi_1),Y)$ 
has distribution $\mu_1$ and $(G_2(Y,\xi_2),Y)$ has distribution $
\mu_2$.  

Clearly pointwise uniqueness in ${\cal S}_{\Gamma ,{\cal C},\nu}$ implies pointwise 
uniqueness for jointly ${\cal C}$-compatible solutions.  The 
converse follows by repeating the reasoning in the proof 
of Theorem \ref{gyw}\ now using the following lemma.  

\begin{lemma}\label{compcpl}
If $\mu_1,\mu_2\in {\cal S}_{\Gamma ,{\cal C},\nu}$ and $(G_1(Y,\xi_
1),Y)$ has distribution $\mu_1$ and 
$(G_2(Y,\xi_2),Y)$ has distribution $\mu_2$, where $\xi_1$ and $\xi_
2$ are 
independent and independent of $Y$, then $G_1(Y,\xi_1),G_2(Y,\xi_
2)$ 
are jointly compatible with $Y$. 
\end{lemma}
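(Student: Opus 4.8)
The plan is to verify the identity defining joint $\mathcal{C}$-compatibility in Definition~\ref{jntcmp} directly, for each fixed $\alpha\in\mathcal{A}$ and each $h\in B(S_2)$, writing $X_1=G_1(Y,\xi_1)$ and $X_2=G_2(Y,\xi_2)$. By the functional monotone class theorem it suffices to test the desired identity against products $g_1g_2g_0$ with $g_1\in B(\mathcal{F}^{X_1}_\alpha)$, $g_2\in B(\mathcal{F}^{X_2}_\alpha)$, and $g_0\in B(\mathcal{F}^Y_\alpha)$, since these products form a multiplicative class generating $\mathcal{F}^{X_1}_\alpha\vee\mathcal{F}^{X_2}_\alpha\vee\mathcal{F}^Y_\alpha$. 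Thus I would reduce the lemma to showing
\[ E[h(Y)\,g_1g_2g_0]=E\big[E[h(Y)\mid\mathcal{F}^Y_\alpha]\,g_1g_2g_0\big]. \]

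The computation rests on conditioning on the full $\sigma(Y)$ and on two observations. First, since $\xi_1$ and $\xi_2$ are independent of each other and of $Y$, and $g_i$ is a measurable function of $X_i=G_i(Y,\xi_i)$, the variables $g_1$ and $g_2$ are conditionally independent given $\sigma(Y)$, so that $E[g_1g_2\mid\sigma(Y)]=E[g_1\mid\sigma(Y)]\,E[g_2\mid\sigma(Y)]$. Second---and this is the crux---I would re-read the individual compatibility of each $X_i$, which holds because $\mu_i\in\mathcal{S}_{\Gamma,\mathcal{C},\nu}$. The compatibility identity (\ref{compid2}), $E[h(Y)\mid\mathcal{F}^{X_i}_\alpha\vee\mathcal{F}^Y_\alpha]=E[h(Y)\mid\mathcal{F}^Y_\alpha]$ for all $h\in B(S_2)$, is precisely the conditional independence of $\sigma(Y)$ and $\mathcal{F}^{X_i}_\alpha$ given $\mathcal{F}^Y_\alpha$. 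By the symmetry of conditional independence (and $\mathcal{F}^Y_\alpha\subset\sigma(Y)$), this is equivalent to $E[g_i\mid\sigma(Y)]=E[g_i\mid\mathcal{F}^Y_\alpha]$ for every $g_i\in B(\mathcal{F}^{X_i}_\alpha)$; in particular each $E[g_i\mid\sigma(Y)]$ is $\mathcal{F}^Y_\alpha$-measurable.

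Combining the two, I would condition the left-hand side on $\sigma(Y)$; since $h(Y)$ and $g_0$ are $\sigma(Y)$-measurable,
\[ E[h(Y)\,g_1g_2g_0]=E\big[h(Y)\,g_0\,E[g_1\mid\sigma(Y)]\,E[g_2\mid\sigma(Y)]\big]. \]
The two inner factors are $\mathcal{F}^Y_\alpha$-measurable by the second observation, so the whole factor $g_0\,E[g_1\mid\sigma(Y)]\,E[g_2\mid\sigma(Y)]$ is $\mathcal{F}^Y_\alpha$-measurable, and replacing $h(Y)$ by $E[h(Y)\mid\mathcal{F}^Y_\alpha]$ does not change the expectation. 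The identical conditioning applied to the right-hand side yields the same expression, which establishes the reduced identity and hence joint compatibility.

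I expect the main obstacle to be the second observation rather than any calculation. A direct attempt to condition on $\sigma(Y)$ stalls because $E[g_i\mid\sigma(Y)]$ a priori depends on all of $\sigma(Y)$, whereas $h(Y)-E[h(Y)\mid\mathcal{F}^Y_\alpha]$ is orthogonal only to $\mathcal{F}^Y_\alpha$-measurable functions; the entire argument turns on recognizing that compatibility, read in the reverse direction, pushes $E[g_i\mid\sigma(Y)]$ down onto $\mathcal{F}^Y_\alpha$. I would therefore state the symmetric characterization of conditional independence precisely and use $\mathcal{F}^Y_\alpha\subset\sigma(Y)$ so that $\sigma(Y)\vee\mathcal{F}^Y_\alpha=\sigma(Y)$. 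The remaining points---that the products $g_1g_2g_0$ generate the joined $\sigma$-algebra and that the completions in the definitions of $\mathcal{F}^{X_i}_\alpha$ and $\mathcal{F}^Y_\alpha$ cause no trouble---are routine.
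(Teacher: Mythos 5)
Your proof is correct, and it hinges on the same pivot as the paper's, but the computation is organized differently, so a comparison is worth making. Your ``second observation''---that ${\cal C}$-compatibility of $X_i$ with $Y$ can be read in reverse as $E[g(X_i)\mid \sigma (Y)]=E[g(X_i)\mid {\cal F}_{\alpha}^Y]$ for $g\in B({\cal B}_{\alpha}^{S_1})$---is precisely the paper's Lemma \ref{adcomp}, which the paper isolates as a technical lemma for exactly this purpose; you derive it from the symmetry of conditional independence (using ${\cal F}_{\alpha}^Y\subset \sigma (Y)$), whereas the paper proves it by a short direct computation, but the content is identical, and you are right that it is the crux. After that the arguments diverge. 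The paper's chain of equalities is asymmetric in the two solutions: it conditions on $(Y,\xi_2)$, uses the independence of $\xi_2$ from $(Y,\xi_1)$ together with Lemma \ref{adcomp} to replace $g_1(X_1)$ by $E[g_1(X_1)\mid {\cal F}_{\alpha}^Y]$, then applies the \emph{forward} compatibility identity for $X_2$, namely $E[f(Y)\mid {\cal F}_{\alpha}^{X_2}\vee {\cal F}_{\alpha}^Y]=E[f(Y)\mid {\cal F}_{\alpha}^Y]$, to replace $f(Y)$, and finally undoes the first substitution. You instead condition on all of $\sigma (Y)$, factor $E[g_1g_2\mid \sigma (Y)]=E[g_1\mid \sigma (Y)]\,E[g_2\mid \sigma (Y)]$ by the conditional independence of the two solutions given $Y$ (a fact the paper never invokes explicitly), push both factors down to ${\cal F}_{\alpha}^Y$ via the reverse identity, and observe that both sides of the target identity then collapse to the same expression. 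Your route is symmetric in $X_1$ and $X_2$, uses only the reverse form of compatibility, and makes transparent exactly where the independence of $\xi_1$ and $\xi_2$ enters; the paper's route avoids the conditional-independence factorization and works purely by substitutions inside the compatibility identities. Both are complete proofs: your monotone class reduction to products $g_1g_2g_0$ is the same (implicit) first step the paper takes with its test functions $g_1(X_1)g_2(X_2)h(Y)$, and your remark that the completions cause no difficulty is accurate.
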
$ $

In order to prove Lemma \ref{compcpl}, we need the 
following technical lemma.

\begin{lemma}\label{adcomp}
 $X$ is ${\cal C}$-compatible with $Y$ if and only if for each $\alpha
\in {\cal A}$ and 
each $g\in B({\cal B}_{\alpha}^{S_1})$,
\begin{equation}E[g(X)|Y]=E[g(X)|{\cal F}_{\alpha}^Y]\label{adid}\end{equation}
\end{lemma}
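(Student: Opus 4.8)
The plan is to recognize that both conditions in the lemma are two formulations of a single conditional independence statement, namely that ${\cal F}_{\alpha}^X$ and $\sigma (Y)$ are conditionally independent given ${\cal F}_{\alpha}^Y$, and then to exploit the symmetry of conditional independence. The observation that makes the two displayed identities the ``two sides'' of this statement is the inclusion ${\cal F}_{\alpha}^Y\subset\sigma (Y)$: because of it, $\sigma (Y)\vee {\cal F}_{\alpha}^Y=\sigma (Y)$, so that (\ref{adid}) reads $E[g(X)|\sigma (Y)\vee {\cal F}_{\alpha}^Y]=E[g(X)|{\cal F}_{\alpha}^Y]$. I would also note at the outset that, as $h$ ranges over $B(S_2)$, the variable $h(Y)$ ranges over all bounded $\sigma (Y)$-measurable random variables, while as $g$ ranges over $B({\cal B}_{\alpha}^{S_1})$, the variable $g(X)$ ranges (up to a.s. equality, using completeness of the space) over all bounded ${\cal F}_{\alpha}^X$-measurable random variables.

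First I would reduce both conditions to the symmetric factorization criterion: ${\cal F}_{\alpha}^X$ and $\sigma (Y)$ are conditionally independent given ${\cal F}_{\alpha}^Y$ if and only if
\[E[g(X)h(Y)|{\cal F}_{\alpha}^Y]=E[g(X)|{\cal F}_{\alpha}^Y]\,E[h(Y)|{\cal F}_{\alpha}^Y],\qquad g\in B({\cal B}_{\alpha}^{S_1}),\ h\in B(S_2).\]
Because this criterion is manifestly symmetric in $g(X)$ and $h(Y)$, it suffices to show that compatibility (\ref{compid2}) is equivalent to it and, separately, that (\ref{adid}) is equivalent to it.

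For compatibility $\Rightarrow$ factorization, I would take the conditional expectation of $g(X)h(Y)$ given ${\cal F}_{\alpha}^Y$, insert the intermediate conditioning on ${\cal F}_{\alpha}^X\vee {\cal F}_{\alpha}^Y$ by the tower property, pull out the ${\cal F}_{\alpha}^X\vee {\cal F}_{\alpha}^Y$-measurable factor $g(X)$, replace $E[h(Y)|{\cal F}_{\alpha}^X\vee {\cal F}_{\alpha}^Y]$ by $E[h(Y)|{\cal F}_{\alpha}^Y]$ via (\ref{compid2}), and pull this now ${\cal F}_{\alpha}^Y$-measurable factor back out. For the reverse implication I would use the standard fact that, to verify $E[h(Y)|{\cal F}_{\alpha}^X\vee {\cal F}_{\alpha}^Y]=E[h(Y)|{\cal F}_{\alpha}^Y]$, it is enough to test the two sides against the $\pi$-system of products $g(X)k(Y)$ with $g\in B({\cal B}_{\alpha}^{S_1})$ and $k\in B({\cal B}_{\alpha}^{S_2})$ that generates ${\cal F}_{\alpha}^X\vee {\cal F}_{\alpha}^Y$; the factorization identity, applied to $g$ and to $hk\in B(S_2)$, together with the tower property, yields the required equality of integrals, and a monotone-class argument extends it to all bounded ${\cal F}_{\alpha}^X\vee {\cal F}_{\alpha}^Y$-measurable test functions. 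The equivalence of (\ref{adid}) with the factorization criterion is entirely analogous, testing $E[g(X)|\sigma (Y)]$ against bounded $\sigma (Y)$-measurable $h(Y)$ and using that $E[g(X)|{\cal F}_{\alpha}^Y]$ is itself $\sigma (Y)$-measurable.

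I expect no serious obstacle, since everything reduces to the tower property and ``taking out what is known.'' The one point requiring care is the bookkeeping of conditioning $\sigma$-algebras: recognizing that the conditioning set in (\ref{adid}) is $\sigma (Y)$ rather than ${\cal F}_{\alpha}^Y$, exploiting ${\cal F}_{\alpha}^Y\subset\sigma (Y)$ so that $E[g(X)|{\cal F}_{\alpha}^Y]$ remains $\sigma (Y)$-measurable, and confirming that $B(S_2)$ and $B({\cal B}_{\alpha}^{S_1})$ exhaust the relevant bounded measurable functions. The symmetry of conditional independence then does the real work, converting the ``$Y$-side'' identity (\ref{compid2}) into the ``$X$-side'' identity (\ref{adid}).
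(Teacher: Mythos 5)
Your proposal is correct and is essentially the paper's own argument: the paper's forward chain $E[f(Y)g(X)]=E[E[f(Y)|{\cal F}_{\alpha}^X\vee {\cal F}_{\alpha}^Y]g(X)]=E[E[f(Y)|{\cal F}_{\alpha}^Y]g(X)]=E[E[f(Y)|{\cal F}_{\alpha}^Y]E[g(X)|{\cal F}_{\alpha}^Y]]=E[f(Y)E[g(X)|{\cal F}_{\alpha}^Y]]$ is precisely your ``compatibility $\Rightarrow$ factorization $\Rightarrow$ (\ref{adid})'' composed into a single display, and the paper's converse (testing $E[f(Y)|{\cal F}_{\alpha}^Y]$ against the generating products $g(X)h(Y)$ with $g\in B({\cal B}_{\alpha}^{S_1})$, $h\in B({\cal B}_{\alpha}^{S_2})$, which is your $\pi$-system/monotone-class step) is precisely your ``(\ref{adid}) $\Rightarrow$ factorization $\Rightarrow$ compatibility.'' Making the symmetric conditional-independence criterion explicit is a clean way to organize the identical tower-property and pull-out computations, but it is the same proof rather than a different route.
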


\begin{proof}
Suppose that $X$ is ${\cal C}$-compatible with $Y$.  Then for $f\in 
B(S_2)$ 
and $g\in B({\cal B}_{\alpha}^{S_1})$,
\begin{eqnarray*}
E[f(Y)g(X)]&=&E[E[f(Y)|{\cal F}^X_{\alpha}\vee {\cal F}_{\alpha}^
Y]g(X)]\\
&=&E[E[f(Y)|{\cal F}^Y_{\alpha}]g(X)]\\
&=&E[E[f(Y)|{\cal F}^Y_{\alpha}]E[g(X)|{\cal F}^Y_{\alpha}]]\\
&=&E[f(Y)E[g(X)|{\cal F}_{\alpha}^Y]],\end{eqnarray*}
and (\ref{adid}) follows.
Conversely, for $f\in B(S_2)$, 
$g\in B({\cal B}_{\alpha}^{S_1})$, and $h\in B({\cal B}_{\alpha}^{
S_2})$, we have 
\begin{eqnarray*}
E[E[f(Y)|{\cal F}_{\alpha}^Y]g(X)h(Y)]&=&E[E[f(Y)|{\cal F}_{\alpha}^
Y]E[g(X)|{\cal F}_{\alpha}^Y]h(Y)]\\
&=&E[f(Y)E[g(X)|Y]h(Y)]\\
&=&E[f(Y)g(X)h(Y)],\end{eqnarray*}
and compatibility follows.
\end{proof}

\begin{proof}{\bf [of Lemma \ref{compcpl}]}
For $f\in B({\cal B}_{\alpha}^{S_1})$, by the independence of $\xi_
2$ from $(Y,\xi_1)$ and
Lemma \ref{adcomp},
\[E[f(G_1(Y,\xi_1))|Y,\xi_2]=E[f(G_1(Y,\xi_1))|Y]=E[f(G_1(Y,\xi_1
))|{\cal F}_{\alpha}^Y].\]
Consequently, for $X_1=G_1(Y,\xi_1)$, $X_2=G_2(Y,\xi_2)$, $f\in B
(S_2)$, 
$g_1,g_2\in B({\cal B}_{\alpha}^{S_1})$, and $h\in B({\cal B}_{\alpha}^{
S_2})$, 
\begin{eqnarray*}
&&E[f(Y)g_1(X_1)g_2(X_2)h(Y)]\\
&&\qquad =E[f(Y)E[g_1(X_1)|Y,\xi_2]g_2(X_2)h(Y)]\\
&&\qquad =E[f(Y)E[g_1(X_1)|{\cal F}^Y_{\alpha}]g_2(X_2)h(Y)]\\
&&\qquad =E[E[f(Y)|{\cal F}^{X_2}_{\alpha}\vee {\cal F}_{\alpha}^
Y]E[g_1(X_1)|{\cal F}^Y_{\alpha}]g_2(X_2)h(Y)]\\
&&\qquad =E[E[f(Y)|{\cal F}^Y_{\alpha}]E[g_1(X_1)|Y,\xi_2]g_2(X_2
)h(Y)]\\
&&\qquad =E[E[f(Y)|{\cal F}^Y_{\alpha}]g_1(X_1)g_2(X_2)h(Y)],\end{eqnarray*}
giving the joint compatibility.
\end{proof}

Lemma \ref{adcomp} also gives the following result.

\begin{proposition}\label{adpt}
If $X$ is a strong, compatible solution, then ${\cal F}^X_{\alpha}
\subset {\cal F}_{\alpha}^Y$ for 
each $\alpha\in {\cal A}$.  (In particular, in the temporal compatibility 
setting, $X$ is adapted to the filtration $\{{\cal F}_t^Y\}$.)  Conversely, 
if ${\cal F}_{\alpha}^X\subset {\cal F}_{\alpha}^Y$ for each $\alpha
\in {\cal A}$ and $\sigma (X)\subset\vee_{\alpha\in {\cal A}}{\cal F}_{
\alpha}^X$, then $X$ is 
a strong, compatible solution.
\end{proposition}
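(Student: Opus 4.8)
The plan is to prove the two implications of Proposition \ref{adpt} separately, using Lemma \ref{adcomp} as the central tool in both directions.

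For the forward direction, suppose $X$ is a strong, compatible solution, so $X = F(Y)$ a.s. for some Borel measurable $F$. I would fix $\alpha \in {\cal A}$ and $g \in B({\cal B}_{\alpha}^{S_1})$, and apply the characterization of compatibility from Lemma \ref{adcomp}, namely $E[g(X)|Y] = E[g(X)|{\cal F}_{\alpha}^Y]$. The left-hand side is just $g(X) = g(F(Y))$ itself, since $X = F(Y)$ is a function of $Y$ and hence $\sigma(Y)$-measurable. Therefore $g(X) = E[g(X)|{\cal F}_{\alpha}^Y]$ a.s., which shows $g(X)$ is ${\cal F}_{\alpha}^Y$-measurable for every such $g$. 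Since ${\cal F}_{\alpha}^X$ is by definition the completion of the $\sigma$-algebra generated by $\{g(X): g\in B({\cal B}_{\alpha}^{S_1})\}$, this gives ${\cal F}_{\alpha}^X \subset {\cal F}_{\alpha}^Y$. The parenthetical temporal remark follows immediately: in the temporal setting ${\cal F}_t^X \subset {\cal F}_t^Y$ for all $t$ means $X$ is $\{{\cal F}_t^Y\}$-adapted.

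For the converse, assume ${\cal F}_{\alpha}^X \subset {\cal F}_{\alpha}^Y$ for each $\alpha$ and $\sigma(X) \subset \vee_{\alpha\in{\cal A}}{\cal F}_{\alpha}^X$. Compatibility is again checked via Lemma \ref{adcomp}: for $g \in B({\cal B}_{\alpha}^{S_1})$, the random variable $g(X)$ is ${\cal F}_{\alpha}^X$-measurable, hence ${\cal F}_{\alpha}^Y$-measurable by hypothesis, so $E[g(X)|Y] = g(X) = E[g(X)|{\cal F}_{\alpha}^Y]$, verifying (\ref{adid}). It remains to produce the strong-solution representation $X = F(Y)$ a.s. The hypothesis $\sigma(X) \subset \vee_{\alpha}{\cal F}_{\alpha}^X \subset \vee_{\alpha}{\cal F}_{\alpha}^Y$ shows that $X$ is measurable with respect to the completion of $\sigma(Y)$; by the Doob--Dynkin lemma (functional representation of $\sigma(Y)$-measurable random variables), there exists a Borel measurable $F: S_2 \to S_1$ with $X = F(Y)$ a.s., which is exactly the definition of a strong solution.

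I expect the only delicate step to be the measurability bookkeeping in the converse: one must take care that $\vee_{\alpha}{\cal F}_{\alpha}^X \subset \vee_{\alpha}{\cal F}_{\alpha}^Y$ sits inside the \emph{completion} of $\sigma(Y)$ rather than $\sigma(Y)$ itself, and then invoke the version of the functional representation lemma that accommodates completed $\sigma$-algebras (this is precisely why the probability space is taken complete and why the $S_i$ are complete separable metric spaces, so that a Borel $F$ exists). The rest is a routine application of Lemma \ref{adcomp} in each direction, with the forward direction being essentially immediate once one observes that $g(F(Y))$ is already $\sigma(Y)$-measurable.
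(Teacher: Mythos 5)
Your proposal is correct and follows essentially the same route as the paper: both directions hinge on Lemma \ref{adcomp}, with the forward implication reading $g(X)=E[g(X)|Y]=E[g(X)|{\cal F}_{\alpha}^Y]$ off the identity (\ref{adid}) to conclude ${\cal F}_{\alpha}^X\subset{\cal F}_{\alpha}^Y$, and the converse combining the inclusion chain $\sigma(X)\subset\vee_{\alpha}{\cal F}_{\alpha}^X\subset\vee_{\alpha}{\cal F}_{\alpha}^Y$ (up to completion of $\sigma(Y)$) with the functional representation lemma to produce $F$ with $X=F(Y)$ a.s. Your explicit attention to the completion issue and the Doob--Dynkin step only fills in details the paper leaves implicit.
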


\begin{proof}
Since $X=F(Y)$, by (\ref{adid}), for each $g\in B({\cal B}_{\alpha}^{
S_1})$,
\[g(X)=g(F(Y))=E[g(F(Y))|Y]=E[g(X)|Y]=E[g(X)|{\cal F}_{\alpha}^Y]
\quad a.s.\]
Consequently, $g(X)$ is ${\cal F}_{\alpha}^Y$-measurable and hence 
${\cal F}_{\alpha}^X\subset {\cal F}_{\alpha}^Y$.

Conversely, the assumption that ${\cal F}_{\alpha}^X\subset {\cal F}_{
\alpha}^Y$ for each $\alpha\in {\cal A}$ 
implies $X$ is compatible with $Y$, and the additional 
assumption implies
\[\sigma (X)\subset\vee_{\alpha\in {\cal A}}{\cal F}_{\alpha}^X\subset
\vee_{\alpha\in {\cal A}}{\cal F}_{\alpha}^Y\subset\sigma (Y),\]
so there exists a Borel measurable function $F$ such that 
$X=F(Y)$ a.s.
\end{proof}

\begin{example}{\rm
McKean-Vlasov limits lead naturally to stochastic 
differential equations of the form
\begin{equation}X(t)=X(0)+\int_0^t\sigma (X(s),\mu_{X(s)})dW(s)+\int_
0^tb(X(s),\mu_{X(s)})ds\label{mv}\end{equation}
where $\mu_{X(s)}$ is required to be the distribution of $X(s)$.  
Alexander Veretennikov raised the question of a 
Yamada-Watanabe type result for equations of this form.  
Setting $Y=(X(0),W)$ and requiring temporal compatibility, 
the set of joint solution measures ${\cal S}_{\Gamma ,{\cal C},\nu}$ may not be 
convex.  Consequently, the results of \cite{Kur07} may
not apply.  Theorem \ref{gyw}, however, does not 
assume convexity of ${\cal S}_{\Gamma ,{\cal C},\nu}$, and hence weak existence 
and pathwise uniqueness imply the existence of a strong 
solution of (\ref{mv}).  } 
\end{example}

\section{Partial compatibility and existence of compatible 
solutions.}\label{partcmp}  Let ${\cal H}\subset B(S_2)$ (or ${\cal H}
\subset L^1(\nu )$).
We will say that a random variable $X$ is 
$({\cal C},{\cal H})$-{\em partially compatible\/} with $Y$ if (\ref{compid2}) holds for 
each $h\in {\cal H}$ but not necessarily for all
$h\in B(S_2)$. We could handle partial compatibility conditions 
the same way we handled compatibility conditions if the 
analog of Lemma  \ref{compcpl} held.  Unfortunately, that 
is not in general the case.

\begin{example}{\rm
Let $\zeta_1,\ldots ,\zeta_4$ be independent with distribution 
$P\{\zeta_i=1\}=P\{\zeta_i=-1\}=\frac 12$, and let 
\[Y=(Y_1,Y_2,Y_3,Y_4)=(\zeta_1\zeta_2,\zeta_2\zeta_3,\zeta_3\zeta_
4,\zeta_4\zeta_1).\]
Note that any three of the components are independent 
but the four are not.  Assume that the index set ${\cal A}$
consists of a single element $\alpha$. Let ${\cal F}_{\alpha}^Y=\sigma 
(Y_1)$, and for $\xi$ 
independent of $Y$ and uniformly distributed on $[0,1]$, let
\[X=G(Y,\xi )={\bf 1}_{\{\xi <\frac 12\}}Y_2+{\bf 1}_{\{\xi\geq\frac 
12\}}Y_3,\]
and ${\cal F}_{\alpha}^X=\sigma (G(Y,\xi ))$.  For $h_0(Y)=Y_4$, 
\[E[h_0(Y)|{\cal F}^Y_{\alpha}\vee {\cal F}_{\alpha}^X]=0=E[h_0(Y
)|{\cal F}_{\alpha}^Y],\]
so $X$ is $({\cal C},{\cal H})$-partially compatible with $Y$ for $
{\cal H}=\{h_0\}$.  
However, if $\xi_1$ and $\xi_2$ are independent, uniform $[0,1]$ 
random variables and we define
\[X_1=G(Y,\xi_1)\mbox{\rm \  and  }X_2=G(Y,\xi_2),\]
then
\[E]h_0(Y)|{\cal F}^Y_{\alpha}\vee {\cal F}^{X_1}_{\alpha}\vee {\cal F}_{
\alpha}^{X_2}]={\bf 1}_{\{X_1\neq X_2\}}Y_1X_1X_2+\frac 13{\bf 1}_{
\{X_1=X_2\}}Y_1,\]
and the corresponding joint partial compatibility 
condition fails. }
\end{example}

Since every compatible solution will satisfy any partial 
compatibility condition, pointwise (pathwise) uniqueness 
proved under a partial compatibility condition will give 
pointwise uniqueness under the compatibility condition.  
This observation is relevant not only under Condition 
\ref{mgcnd}\ but also for the general stochastic 
differential equation given in Example \ref{smgsde}.

Uniqueness results for equations of the form 
(\ref{smgsde1}) are usually proved under the 
assumption that solutions $X_1$ and $X_2$ and $Y=(U,V)$ are 
adapted to a filtration $\{{\cal F}_t\}$ under which $V$ is a 
semimartingale.  $V$ can always be written as $V=M+A$, 
where $M$ is a local martingale with jumps bounded by $1$ 
and $A$ is a finite variation process.  The localizing 
sequence for $M$ can be taken to be 
$\tau_n=\inf\{t:\sup_{s\leq t}|M(s)|\geq n\}$, and an appropriate joint 
partial compatibility condition follows from the 
observation that for $t>s$,
\begin{eqnarray*}
E[M(t\wedge\tau_n)|{\cal F}_s^{X_1}\vee {\cal F}_s^{X_2}\vee {\cal F}_
s^Y]&=&E[E[M(t\wedge\tau_n)|{\cal F}_s]|{\cal F}_s^{X_1}\vee {\cal F}_
s^{X_2}\vee {\cal F}_s^Y]\\
&=&M(s\wedge\tau_n)\\
&=&E[M(t\wedge\tau_n)|{\cal F}_s^Y].\end{eqnarray*}
Consequently, pathwise uniqueness results in settings of 
this form imply pathwise uniqueness for jointly 
compatible solutions.  

To apply Theorem \ref{gyw} when pointwise uniqueness 
is known under partial compatibility conditions still 
requires existence of a compatible solution since we do 
not 
have the analog of Lemma \ref{compcpl} for partial 
compatibility.  The following lemma gives a general 
approach to the required existence.

\begin{lemma}\label{cmpexist}
Suppose there exist $C_{\alpha}^{S_2}\subset C_b(S_2)$ and $C_{\alpha}^{
S_1}\subset C_b(S_1)$ such 
that ${\cal B}_{\alpha}^{S_2}=\sigma (g\in C_{\alpha}^{S_2})$ and $
{\cal B}_{\alpha}^{S_1}=\sigma (g\in C_{\alpha}^{S_1})$.  (Without loss 
of generality, we can assume $C_{\alpha}^{S_1}$ and $C_{\alpha}^{
S_2}$ are algebras.) 
Suppose 
$(X_n,Y)\in S_1\times S_2$, $X_n$ is ${\cal C}$-compatible with $
Y$, $(X_n,Y)\Rightarrow (X,Y)$.  Then 
$X$ is ${\cal C}$-compatible with $Y$. 
\end{lemma}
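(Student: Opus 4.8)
The plan is to work entirely through the variational characterization (\ref{cmpid2}) of compatibility, which presents ${\cal C}$-compatibility of $X$ with $Y$ as the equality, for every $\alpha\in{\cal A}$ and every $h$,
\[\inf_{f\in B({\cal B}_{\alpha}^{S_1}\times{\cal B}_{\alpha}^{S_2})}E[(h(Y)-f(X,Y))^2]=\inf_{f\in B({\cal B}_{\alpha}^{S_2})}E[(h(Y)-f(Y))^2].\]
Since a function of $Y$ alone also lies in the larger minimization class, the inequality $\le$ holds for any joint law; so for the limit $(X,Y)$ only the reverse inequality $\ge$ need be established. Moreover the right-hand side depends only on the law of $Y$, which is common to all the $(X_n,Y)$ and to $(X,Y)$; call this constant $R_{\alpha}(h)$. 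Thus compatibility of each $X_n$ says precisely that the left-hand infimum for $(X_n,Y)$ equals $R_{\alpha}(h)$, and the goal is to show the left-hand infimum for the limit is $\ge R_{\alpha}(h)$.

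The enabling step is to replace Borel test functions $f$ by bounded continuous ones. Because ${\cal B}_{\alpha}^{S_i}=\sigma(C_{\alpha}^{S_i})$ with $C_{\alpha}^{S_i}$ an algebra of bounded continuous functions (augmented, harmlessly, by the constants), the algebra $D_{\alpha}$ of bounded continuous functions on $S_1\times S_2$ generated by $C_{\alpha}^{S_1}\cup C_{\alpha}^{S_2}$ generates ${\cal B}_{\alpha}^{S_1}\times{\cal B}_{\alpha}^{S_2}$, and a functional monotone class argument shows $D_{\alpha}$ is dense in $L^2$ of that $\sigma$-algebra under any probability measure. Consequently, for every joint law the infimum over $B({\cal B}_{\alpha}^{S_1}\times{\cal B}_{\alpha}^{S_2})$ equals the infimum over the continuous class $D_{\alpha}$, and likewise for the $Y$-marginal infimum. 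This reduction uses no compatibility and holds verbatim for each $X_n$ and for $X$.

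I would then fix $h\in C_b(S_2)$ and $f\in D_{\alpha}$. The integrand $(x,y)\mapsto(h(y)-f(x,y))^2$ lies in $C_b(S_1\times S_2)$, so $(X_n,Y)\Rightarrow(X,Y)$ gives $E[(h(Y)-f(X_n,Y))^2]\to E[(h(Y)-f(X,Y))^2]$. For each $n$ the left member is at least $\inf_{f'\in D_{\alpha}}E[(h(Y)-f'(X_n,Y))^2]=R_{\alpha}(h)$, by compatibility of $X_n$ and the density reduction, so the limit satisfies $E[(h(Y)-f(X,Y))^2]\ge R_{\alpha}(h)$; taking the infimum over $f\in D_{\alpha}$ gives $\ge R_{\alpha}(h)$, hence equality with the automatic bound $\le R_{\alpha}(h)$. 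By the Hilbert-space reading of (\ref{cmpid2})---equality of the distances of $h(Y)$ to the nested subspaces $L^2({\cal F}_{\alpha}^Y)\subset L^2({\cal F}_{\alpha}^X\vee{\cal F}_{\alpha}^Y)$ forces their projections to coincide---this is exactly (\ref{compid2}) for the limit and this $h$.

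It remains to pass from $h\in C_b(S_2)$ to all $h\in B(S_2)$. The map $h\mapsto E[h(Y)\mid{\cal F}_{\alpha}^X\vee{\cal F}_{\alpha}^Y]-E[h(Y)\mid{\cal F}_{\alpha}^Y]$ is linear in $h$ and, by conditional dominated convergence, respects bounded pointwise limits; the class of $h$ on which it vanishes is therefore a vector space containing the constants and the multiplicative family $C_b(S_2)$ and closed under bounded monotone limits, so the functional monotone class theorem extends the identity to all $h\in B(S_2)$, establishing ${\cal C}$-compatibility of $X$ with $Y$. The main obstacle throughout is that neither the optimal $f$ nor the conditional expectation $E[h(Y)\mid{\cal F}_{\alpha}^Y]$ is continuous, so weak convergence cannot be applied to (\ref{compid2}) directly; the variational form (\ref{cmpid2}) together with the continuous-function density reduction is precisely what converts the problem into one to which $(X_n,Y)\Rightarrow(X,Y)$ applies, and it is crucial that only the harder inequality consumes the limit while the easy inequality comes for free.
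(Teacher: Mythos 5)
Your proof is correct, but it takes a genuinely different route from the paper's. The paper argues directly on the defining identity: compatibility of $X_n$ gives
$E[f(Y)g_1(X_n)g_2(Y)]=E[E[f(Y)\,|\,{\cal F}_{\alpha}^Y]\,g_1(X_n)g_2(Y)]$
for $f\in C_b(S_2)$, $g_1\in C_{\alpha}^{S_1}$, $g_2\in C_{\alpha}^{S_2}$, and it overcomes the discontinuity of $E[f(Y)\,|\,{\cal F}_{\alpha}^Y]$ by approximating it in $L^1(\nu)$ by some $f_{\alpha,\epsilon}\in C_b(S_2)$ --- an approximation whose error is uniform in $n$ precisely because every $(X_n,Y)$ has the same $Y$-marginal $\nu$ --- and then interchanging the limits $n\rightarrow\infty$ and $\epsilon\rightarrow 0$ to obtain the same identity for $(X,Y)$. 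You never approximate the conditional expectation at all: you pass to the variational form (\ref{cmpid2}), where the fixed marginal enters instead as the constancy of the right-hand infimum $R_{\alpha}(h)$, the inequality $\leq$ is free for any joint law, and the inequality $\geq$ survives the weak limit because for each fixed $f$ in your continuous algebra $D_{\alpha}$ the functional $E[(h(Y)-f(X_n,Y))^2]$ is continuous under $(X_n,Y)\Rightarrow (X,Y)$ and bounded below by $R_{\alpha}(h)$ along the sequence. Both proofs rest on the same two pillars --- test functions drawn from $C_{\alpha}^{S_1},C_{\alpha}^{S_2}$ so that weak convergence applies, and a density-of-continuous-functions statement made uniform in $n$ by the fixed law $\nu$ --- but they package them differently: the paper runs a direct equality argument with a double limit, while yours is a one-sided semicontinuity argument in which the infimum structure does the work, at the cost of invoking the Hilbert-space reading of (\ref{cmpid2}) (equal distances to the nested subspaces force equal projections, by the Pythagorean identity) and an explicit final monotone-class extension from $h\in C_b(S_2)$ to $h\in B(S_2)$. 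That last step, and the $L^2$-density of $D_{\alpha}$ in the bounded ${\cal B}_{\alpha}^{S_1}\times{\cal B}_{\alpha}^{S_2}$-measurable functions under each of the laws $\mu_{X_n,Y}$ and $\mu_{X,Y}$, are both standard and correctly justified; note that the paper's own proof also needs such monotone-class passages (from its test-function identity to full compatibility) but leaves them implicit. A side benefit of your formulation is that it exhibits compatibility, for fixed $\nu$, as an intersection of constraints each of which is closed under convergence in distribution, which is a useful structural fact beyond this lemma.
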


\begin{remark}\label{ascnt}
With reference to the continuous mapping theorem (for 
example, \cite{EK86}, Corollary 3.1.9), the continuity 
assumption on the functions generating ${\cal B}_{\alpha}^{S_1}$ and $
{\cal B}_{\alpha}^{S_2}$ can 
be weakened.  For ${\cal B}_{\alpha}^{S_1}$, it is enough for the functions $
g$ 
to be continuous almost everywhere with respect to $\mu_X$, 
and for ${\cal B}_{\alpha}^{S_2}$, the functions $g$ only need to be continuous 
almost everywhere with respect to $\mu_Y$.  This observation 
is particularly relevant for cadlag processes since the 
evaluation function $x\in D_E[0,\infty )\rightarrow x(t)\in E$ is not 
continuous, but it will be almost everywhere continuous 
for the process of interest provided $t$ is not a fixed 
point of discontinuity, that is, provided 
$P\{X(t)\neq X(t-)\}=0$.

In many settings, natural approximations for 
a solution will satisfy $X_n=F_n(Y)$ and ${\cal F}^{X_n}_{\alpha}
\subset {\cal F}_{\alpha}^Y$ 
and hence will be strong, 
compatible solutions of approximating models. (See 
Proposition \ref{adpt}.)

\end{remark}

\begin{proof}
For $f\in C_b(S_1)$, $g_1\in C_{\alpha}^{S_1}$ and $g_2\in C_{\alpha}^{
S_2}$
\[E[f(Y)g_1(X_n)g_2(Y)]=E[E[f(Y)|{\cal F}_{\alpha}^Y]g_1(X_n)g_2(
Y)].\]
Since $C_b(S_2)$ is dense in $L^1(\nu )$, for each $\alpha$ and $
\epsilon >0$, there exists 
$f_{\alpha ,\epsilon}\in C_b(S_2)$ such that 
\[E[|E[f(Y)|{\cal F}_{\alpha}^Y]-f_{\alpha ,\epsilon}(Y)|]\leq\epsilon 
.\]
Consequently, it follows that
\begin{eqnarray*}
\lim_{n\rightarrow\infty}E[f(Y)g_1(X_n)g_2(Y)]&=&E[f(Y)g_1(X)g_2(
Y)]\\
&=&\lim_{n\rightarrow\infty}E[E[f(Y)|{\cal F}_{\alpha}^Y]g_1(X_n)
g_2(Y)]\\
&=&\lim_{\epsilon\rightarrow 0}\lim_{n\rightarrow\infty}E[f_{\alpha 
,\epsilon}(Y)g_1(X_n)g_2(Y)]\\
&=&\lim_{\epsilon\rightarrow 0}E[f_{\alpha ,\epsilon}(Y)g_1(X)g_2
(Y)]\\
&=&E[E[f(Y)|{\cal F}_{\alpha}^Y]g_1(X)g_2(Y)]\end{eqnarray*}
verifying compatibility.
\end{proof}

Note that in the proof of the above lemma, we use the 
fact that $Y$, or more precisely, the distribution of $Y$, 
does not depend on $n$ in order to obtain the $f_{\alpha ,\epsilon}$.

Problems do arise in which input processes have fixed 
points of discontinuity and the application of Lemma 
\ref{cmpexist}\ is problematic even with the observation 
made in Remark \ref{ascnt}.  The following definition of 
RC-compatibility (or more precisely, RC-temporal 
compatibility) avoids this problem.  It looks strange, but 
Lemma \ref{rcprd}\ shows that it is equivalent to a more 
natural assumption. $M_E[0,\infty )$ denotes the collection of Borel 
measurable functions $x:[0,\infty )\rightarrow E$. $S_i$ could be $
D_{E_i}[0,\infty )$ 
under the usual Skorohod topology, but other spaces can 
be useful.  (See Example \ref{bw}.)

\begin{definition}
Let ${\cal A}=\{(t,\epsilon ):t\in [0,\infty ),\epsilon >0\}$, $S_
1\subset M_{E_1}[0,\infty )$, and 
$S_2\subset M_{E_2}[0,\infty )$.  For $\alpha =(t,\epsilon )$, define
\[C_{\alpha}^{S_2}=\{\int_s^{s+r}g(x(u))du:s\leq t,0<r<\epsilon ,
g\in C_b(E_2)\]
and 
\[C_{\alpha}^{S_1}=\{\int_{(s-r)\vee 0}^sg(x(u))ds:s\leq t,0<r<\epsilon 
,g\in C_b(E_1)\},\]
and set ${\cal B}_{\alpha}^{S_2}=\sigma (g\in C_{\alpha}^{S_2})$ and $
{\cal B}_{\alpha}^{S_1}=\sigma (g\in C_{\alpha}^{S_1})$.  Then 
${\cal C}_{RC}\equiv \{({\cal B}_{\alpha}^{S_1},{\cal B}_{\alpha}^{
S_2}):\alpha\in {\cal A}\}$ defines the {\em RC-compatibility }
{\em structure\/} (RC for ``right continuous'') on 
$(M_{E_1}[0,\infty ),M_{E_2}[0,\infty ))$.
\end{definition}

Note that if $S_1=D_{E_1}[0,\infty )$ and $S_2=D_{E_2}[0,\infty )$, then $
C_{\alpha}^{S_1}$ 
and $C_{\alpha}^{S_2}$ are collections of continuous functions and 
Lemma \ref{cmpexist}\ applies to RC-compatibility.

Assume that $X$ and $Y$ are right continuous, and let $\{{\cal F}_
t^X\}$ 
and $\{{\cal F}_t^Y\}$ denote their natural filtrations.  Note that for 
$t>0$, ${\cal F}_{(t,\epsilon )}^X=$${\cal F}_{t-}^X\equiv\vee_{s
<t}{\cal F}_s^X$,  $\cap_{\epsilon >0}{\cal F}^Y_{(t,\epsilon )}=
{\cal F}_{t+}^Y\equiv\cap_{s>t}{\cal F}_s^Y$, and 
${\cal F}^Y_{(t,\epsilon )}={\cal F}^Y_{t+\epsilon -}$.  We have the following lemma.  

\begin{lemma}\label{rcprd}
Let $X$ be a right continuous, $E_1$-valued process 
and $Y$ be a right continuous, $E_2$-valued process.  
Then $X$ is RC-compatible with $Y$ if and only if
\begin{equation}E[h(Y)|{\cal F}_{t+}^Y\vee {\cal F}_{t-}^X]=E[h(Y
)|{\cal F}_{t+}^Y]\label{rccmp}\end{equation}
for all $t>0$.
\end{lemma}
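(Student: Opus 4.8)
The plan is to use the three filtration identities displayed just before the lemma to restate both conditions in terms of ${\cal F}_{t-}^X$, ${\cal F}_{u-}^Y$, and ${\cal F}_{t+}^Y$, then to strip the $X$-information out of the conditioning via Lemma \ref{adcomp}, after which the equivalence reduces to two short measurability arguments.

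Since ${\cal F}_{(t,\epsilon)}^X={\cal F}_{t-}^X$ and ${\cal F}_{(t,\epsilon)}^Y={\cal F}_{(t+\epsilon)-}^Y$, writing $u=t+\epsilon$ (so that $(t,\epsilon)$ ranges over the index set exactly as $(t,u)$ ranges over $0\le t<u$), RC-compatibility asserts precisely that for every such pair and every $h\in B(S_2)$,
\[E[h(Y)|{\cal F}_{t-}^X\vee {\cal F}_{u-}^Y]=E[h(Y)|{\cal F}_{u-}^Y],\]
whereas (\ref{rccmp}) is the same statement with ${\cal F}_{u-}^Y$ replaced by ${\cal F}_{t+}^Y$. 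Applying Lemma \ref{adcomp} to the structure ${\cal C}_{RC}$ converts RC-compatibility into the adaptedness form: for every $0\le t<u$ and every bounded ${\cal F}_{t-}^X$-measurable $\phi$ (it suffices to take $\phi=g(X)$ with $g$ ranging over the generators of ${\cal F}_{t-}^X$), $E[\phi|Y]=E[\phi|{\cal F}_{u-}^Y]$; the same two-line computation used to prove Lemma \ref{adcomp} converts (\ref{rccmp}), for every $t>0$, into $E[\phi|Y]=E[\phi|{\cal F}_{t+}^Y]$. This reformulation is the crucial step, because the left-hand side $E[\phi|Y]$ no longer depends on the conditioning level.

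Both implications then follow immediately. If $E[\phi|Y]=E[\phi|{\cal F}_{t+}^Y]$, then because ${\cal F}_{t+}^Y\subset {\cal F}_{u-}^Y\subset\sigma(Y)$ for $u>t$, the common value is ${\cal F}_{u-}^Y$-measurable, so projecting $E[\phi|Y]$ onto ${\cal F}_{u-}^Y$ leaves it unchanged and yields RC-compatibility. Conversely, if $E[\phi|Y]=E[\phi|{\cal F}_{u-}^Y]$ for every $u>t$, then $E[\phi|Y]$ is ${\cal F}_{u-}^Y$-measurable for every $u>t$, hence measurable with respect to $\cap_{u>t}{\cal F}_{u-}^Y={\cal F}_{t+}^Y$; projecting onto ${\cal F}_{t+}^Y$ therefore gives $E[\phi|{\cal F}_{t+}^Y]=E[\phi|Y]$, which is (\ref{rccmp}). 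The $t=0$ case of RC-compatibility is automatic because ${\cal F}_{0-}^X$ is degenerate, matching the restriction $t>0$ in (\ref{rccmp}).

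The step demanding the most care is this backward direction, and with it the measurability upgrade above: passing from ``$E[\phi|Y]$ agrees a.s. with an ${\cal F}_{u-}^Y$-measurable variable for each $u>t$'' to ``$E[\phi|Y]$ is ${\cal F}_{t+}^Y$-measurable'' uses both the completeness built into the filtrations ${\cal F}_s^Y$ and the stated identity $\cap_{u>t}{\cal F}_{u-}^Y={\cal F}_{t+}^Y$. The reason I would route the argument through Lemma \ref{adcomp} rather than work directly with the joins ${\cal F}_{t-}^X\vee {\cal F}_{u-}^Y$ is that the naive identity $\cap_{u>t}({\cal F}_{t-}^X\vee {\cal F}_{u-}^Y)={\cal F}_{t-}^X\vee {\cal F}_{t+}^Y$ is false in general (joins do not commute with intersections); a direct attack would instead need reverse martingale convergence as $u\downarrow t$ followed by a single tower projection onto ${\cal F}_{t-}^X\vee {\cal F}_{t+}^Y$, using only the inclusion ${\cal F}_{t-}^X\vee {\cal F}_{t+}^Y\subset\cap_{u>t}({\cal F}_{t-}^X\vee {\cal F}_{u-}^Y)$. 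Pulling the $X$-information into the integrand sidesteps this non-commutativity entirely.
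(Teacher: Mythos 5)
Your proof is correct, but it follows a genuinely different route from the paper's. The paper works directly with the compatibility identity and uses martingale convergence in both directions: for the forward implication it lets $\epsilon\downarrow 0$ (reverse martingale convergence) to obtain $E[h(Y)|\cap_{\epsilon>0}({\cal F}_{(t,\epsilon)}^Y\vee{\cal F}_{t-}^X)]=E[h(Y)|{\cal F}_{t+}^Y]$ and then conditions both sides on ${\cal F}_{t+}^Y\vee{\cal F}_{t-}^X$, which requires only the one-sided inclusion ${\cal F}_{t+}^Y\vee{\cal F}_{t-}^X\subset\cap_{\epsilon>0}({\cal F}_{(t,\epsilon)}^Y\vee{\cal F}_{t-}^X)$; for the converse it lets $s\uparrow\epsilon$ (upward martingale convergence) and then conditions down onto ${\cal F}_{(t,\epsilon)}^Y\vee{\cal F}_{(t,\epsilon)}^X$, again using only a one-sided inclusion. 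So the ``direct attack'' you sketch and discard in your final paragraph is precisely the paper's argument---the failure of joins to commute with intersections is handled there by the intermediate-conditioning trick, not avoided. What your detour through Lemma \ref{adcomp} buys is elementarity: once both hypotheses are rewritten as statements about $E[\phi|Y]$ for bounded ${\cal F}_{t-}^X$-measurable $\phi$, the left-hand side no longer depends on the conditioning level, and both limits collapse into pure measurability statements ($\cap_{u>t}{\cal F}_{u-}^Y={\cal F}_{t+}^Y$ together with completeness of the filtrations), with no convergence theorem needed. One caveat you gesture at but should state explicitly: the pair $({\cal F}_{t-}^X,{\cal F}_{t+}^Y)$ is not literally a pair $({\cal F}_\alpha^X,{\cal F}_\alpha^Y)$ arising from a compatibility structure in the sense of Definition \ref{cptdef2}, since ${\cal F}_{t+}^Y$ is an intersection of completions and need not be exhibited as the completion of $Y^{-1}({\cal D})$ for a Borel sub-$\sigma$-algebra ${\cal D}\subset{\cal B}(S_2)$; hence you must indeed re-run the computation proving Lemma \ref{adcomp} rather than cite its statement. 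That re-run is legitimate because the computation uses only that ${\cal F}_{t+}^Y$ lies in the completion of $\sigma(Y)$, that $\phi$ is measurable with respect to the join being conditioned on, and a monotone class argument for the join; all of these hold here, so with that point made explicit your argument is complete, including the trivial $t=0$ case.
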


\begin{proof}
Since ${\cal F}_{(t,\epsilon )}^X={\cal F}_{t-}^X$, RC-compatibility implies
\[E[h(Y)|{\cal F}_{(t,\epsilon )}^Y\vee {\cal F}_{t-}^X]=E[h(Y)|{\cal F}_{
(t,\epsilon )}^Y].\]
Taking the limit $\epsilon\rightarrow 0$, we have 
\[E[h(Y)|\cap_{\epsilon >0}({\cal F}_{(t,\epsilon )}^Y\vee {\cal F}_{
t-}^X)]=E[h(Y)|{\cal F}_{t+}^Y].\]
Since $\cap_{\epsilon >0}({\cal F}_{(t,\epsilon )}^Y\vee {\cal F}_{
t-}^X)\supset {\cal F}_{t+}^Y\vee {\cal F}_{t-}^X\supset {\cal F}_{
t+}^Y$, conditioning both 
sides on ${\cal F}_{t+}^Y\vee {\cal F}_{t-}^X$ gives (\ref{rccmp}).

Now assuming (\ref{rccmp}) holds for all $t>0$, we have
\[E[h(Y)|{\cal F}_{(t+s)+}^Y\vee {\cal F}_{t+s-}^X]=E[h(Y)|{\cal F}_{
t+s+}^Y],\]
and letting $s\rightarrow\epsilon -$, we have 
\begin{equation}E[h(Y)|\vee_{s<\epsilon}({\cal F}_{(t+s)+}^Y\vee 
{\cal F}_{t+s-}^X)]=E[h(Y)|{\cal F}_{t+\epsilon -}^Y]=E[h(Y)|{\cal F}^
Y_{(t,\epsilon )}].\label{rc3}\end{equation}
Since 
\[\vee_{s<\epsilon}{\cal F}_{(t+s)+}^Y\vee {\cal F}_{t+s-}^X\supset 
{\cal F}^Y_{(t+\epsilon )-}\vee {\cal F}_{t+\epsilon -}\supset {\cal F}^
Y_{(t,\epsilon )}\vee {\cal F}_{(t,\epsilon )}^X,\]
conditioning both sides of (\ref{rc3}) on ${\cal F}^Y_{(t,\epsilon 
)}\vee {\cal F}_{(t,\epsilon )}^X$ gives 
the desired result.
\end{proof}

\begin{example}{\rm
An Euler approximation gives a natural approach to 
proving existence of compatible or RC-compatible solutions for
\begin{equation}X(t)=U(t)+\int_0^tH(X,s-)dV(s).\label{smgsde2}\end{equation}
Set $\eta_n(t)=\frac {[nt]}n$, and et $U_n=U\circ\eta_n$ and $V_n
=V\circ\eta_n$.  Then 
existence of a solution $X_n$ of
\begin{equation}X_n(t)=U_n(t)+\int_0^tH(X_n,s-)dV_n(s),\label{smgsde2n}\end{equation}
is immediate and $X_n$ is adapted to $\{{\cal F}_t^Y\}$.  It follows that 
$X_n$ is both temporally compatible and 
RC-compatible with $Y$.  Theorem 5.4 of \cite{KP91} 
gives conditions on $H$ that ensure the convergence of 
$(U_n,V_n,X_n)$ to $(U,V,X)$ satisfying (\ref{smgsde2}).  
Lemma \ref{cmpexist} then ensures that $X$ is temporally 
compatible with $Y=(U,V)$, if $Y$ has no fixed points of 
discontinuity, or at least
RC-compatible with $Y$.
}
\end{example}

\begin{example}\label{bw}{\rm
Let $T>0$ and $Y=(U,V)$ be a process in $D_{{\Bbb R}^m\times {\Bbb R}^
d}[0,T]$.  
Let $f$ be a measurable function
\[f:[0,T]\times D_{{\Bbb R}^m}[0,T]\times D_{{\Bbb R}^d}[0,T]\rightarrow 
{\Bbb R}^m\]
satisfying $f(t,x,v)=f(t,x(\cdot\vee t),v)$ for each 
$(t,x,v)\in [0,T]\times D_{{\Bbb R}^m}[0,T]\times D_{{\Bbb R}^d}[
0,T]$.  
Following \citet*{BER05}, 
we consider the backward stochastic differential equation
\[X(t)=U(t)+E[\int_t^Tf(s,X,V)ds|{\cal F}_t^Y\vee {\cal F}_t^X],\]
where \cite{BER05} requires Condition \ref{mgcnd}.  We 
will require $X$ to be temporally compatible with 
$Y$, or if $Y$ has fixed points of  discontinuity, that $X$ be 
RC-compatible with $Y$. Setting $X_n(t)=U(T)$ for $t\geq T$, 
there exist solutions to the approximating problems
\[X_n(t)=U(t)+E[\int_t^Tf(s,X_n(\cdot +\frac 1n),V)ds|{\cal F}_t^
Y].\]
Assume that $|f(s,x,v)|\leq g(s,v)$ and $E[\int_0^Tg(s,V)ds]<\infty$. Set
\[Z_n(t)=E[\int_t^Tf(s,X_n(\cdot +\frac 1n),V)ds|{\cal F}_t^Y].\]
Recalling the definition of conditional variation, we have
\[V_T(Z_n)\equiv\sup_{\{t_i\}}E[\sum_i|E[Z_n(t_{i+1})-Z_n(t_i)|{\cal F}_{
t_i}^Y]|]\leq E[\int_0^Tg(s,V)ds],\]
where the $\sup$ is over all partitions of $[0,T]$.
We also have 
\[\sup_{0\leq t\leq T}|Z_n(t)|\leq\sup_{0\leq t\leq T}E[\int_0^Tg
(s,V)ds|{\cal F}_t^Y]<\infty\quad a.s.,\]
so the sequence $\{Z_n\}$ satisfies the Meyer-Zheng 
conditions (see \cite{MZ84,Kur91}), or  more precisely, 
$\{Z_n\}$ is relatively compact in the Jakubowski topology 
(see \cite{Jak97}).  The Jakubowski topology is not 
metrizable, but versions of the Prohorov theorem and the Skorohod 
representation theorem still hold.  See Theorem 1.1 of 
\cite{Jak97}.  We will denote the 
space of  cadlag functions under the Jakubowski topology 
by $D_E^{{\cal J}}[0,T]$.

Convergence in the Jakubowski 
topology implies convergence in measure, that is
convergence in the metric $d_m(x,y)=\int_0^T|x(s)-y(s)|\wedge 1ds$ 
which is used in the original paper, 
\cite{MZ84}, and in \cite{BER05}.  Relative compactness 
of $\{Z_n\}$ in $D_{{\Bbb R}^m}^{{\cal J}}[0,T]$ implies relative compactness of 
$(Z_n,Y)$ in $D^{{\cal J}}_{{\Bbb R}^m\times {\Bbb R}^m\times {\Bbb R}^
d}[0,T]$.  In contrast to the 
Skorohod topology (that is, the Skorohod $J_1$ topology),
\[D^{{\cal J}}_{{\Bbb R}^m\times {\Bbb R}^m\times {\Bbb R}^d}[0,T
]=D^{{\cal J}}_{{\Bbb R}^m}[0,T]\times D^{{\cal J}}_{{\Bbb R}^m\times 
{\Bbb R}^d}[0,T].\]

Addition is continuous 
in the Jakubowski topology, so if $(Z_n,Y)$  converges, 
then setting
$X_n=U+Z_n$, $(X_n,Z_n,Y)$ converges.  If $X_n$ converges to $X$, then 
$X_n(\cdot +\frac 1n)$ converges to $X$ and for all but at most 
countably many $t$, $X_n(t)$ converges to $X(t)$. 

For each $t\in [0,T]$, assume that the mapping 
\[(x,v)\in D^{{\cal J}}_{{\Bbb R}^m\times {\Bbb R}^d}[0,T]\rightarrow
\int_t^Tf(s,x,v)ds\in {\Bbb R}\]
is continuous.  Assume that we have selected a 
subsequence such that $(X_n,Y)\Rightarrow (X,Y)$.  By Theorem 
3.11 of \cite{Jak97} there exists a countable set $D$ such 
that for $\{t_i\}\subset [0,T]\setminus D$
\[(X_n(t_1),\ldots ,X_n(t_k),Y(t_1),\ldots ,Y(t_k),X_n,Y)\Rightarrow 
(X(t_1),\ldots ,X(t_k),Y(t_1),\ldots ,Y(t_k),X_n,Y)\]
in $({\Bbb R}^m)^k\times ({\Bbb R}^{m+d})^k\times D^{{\cal J}}_{{\Bbb R}^
m\times {\Bbb R}^{m+d}}[0,T]$.

Let $g_i\in C_b({\Bbb R}^{2m+d})$.  Then for $0\leq t_1<\cdots <t_
k\leq t$, 
$\{t_i\},t\in [0,T]\setminus D$,
\begin{eqnarray*}
0&=&E[(X_n(t)-U(t)-\int_t^Tf(s,X_n,V)ds)\prod_{i=1}^kg_i(X_n(t_i)
,Y(t_i))]\\
&\rightarrow&E[(X(t)-U(t)-\int_t^Tf(s,X,V)ds)\prod_{i=1}^kg_i(X(t_
i),Y(t_i))].\end{eqnarray*}
Note that since
\[|X_n(t)-U(t)|\leq E[\int_0^Tg(s,V)ds|{\cal F}_t^Y],\]
$\{X_n(t)-U(t)\}$ is uniformly integrable justifying the 
convergence of the expectations.2
It follows that for each $t\in [0,T]\setminus D$,
\[X(t)=U(t)+E[\int_t^Tf(s,X,V)ds)|{\cal F}_t^X\vee {\cal F}_t^Y],\]
and the identity extends to all $t\in [0,T]$ by the right 
continuity of  $X$ and $U$.

If $Y$ has no fixed points of discontinuity, then $X$ has no 
fixed points of discontinuity and $X$ is  temporally 
compatible with $Y$.  In any case, $X$ is RC-compatible with 
$Y$.
}

\end{example}

\begin{example}{\rm 
The multiple time-change equation 
\begin{equation}X(t)=X(0)+\sum_{k=1}^mW_k(\int_0^t\beta_k(X(s))ds
)\zeta_k+\int_0^tF(X(s))ds,\label{mtceq}\end{equation}
arises naturally in the derivation of diffusion 
approximations for continuous time Markov chains.  (See, 
for example, \cite{EK86}, Chapter11.)  Here the $W_k$ are 
independent, scalar, standard Brownian motions, $X(0)$ is 
a ${\Bbb R}^d$-valued random variable independent of the $W_k$, $
\zeta_k\in {\Bbb R}^d$, 
and the $\beta_k$ and $F$ are measurable functions (typically 
continuous) satisfying $\beta_k:{\Bbb R}^d\rightarrow [0,\infty )$ and 
$F:{\Bbb R}^d\rightarrow {\Bbb R}^d$.  
Setting $Y=(X(0),W_1,\ldots ,W_m)$ and 
$\tau_k(t)=\int_0^t\beta_k(X(s))ds$, for $\alpha\in [0,\infty )^m$,
 define 
\[{\cal F}_{\alpha}^Y=\sigma (W_k(s_k):0\leq s_k\leq\alpha_k,k=1,
\ldots ,m)\vee\sigma (X(0))\]
 and
\[{\cal F}_{\alpha}^X=\sigma (\{\tau_1(t)\leq s_1,\tau_2(t)\leq s_
2,\ldots \}:s_i\leq\alpha_i,i=1,2,\ldots ,t\geq 0).\]

If the $\beta_k$ are continuous, $\{{\cal F}_{\alpha}^Y\}$ and $\{
{\cal F}_{\alpha}^X\}$ determine a 
compatibility condition satisfying the conditions of 
Lemma \ref{cmpexist}.  

If $X$ is a compatible solution, then $\tau (t)=(\tau_1(t),\ldots 
,\tau_m(t))$ 
is a stopping time with respect to $\{{\cal F}_{\alpha}^X\vee {\cal F}_{
\alpha}^Y\}$ and 
$W_k(\int_0^t\beta_k(X(s))ds)$, $k=1,\ldots ,m$, are $\{{\cal F}_{
\tau (t)}\}$-martingales.  It 
follows that  $X$ is a solution of the martingale problem for 
\[Af(x)=\frac 12\sum_{i,j}a_{ij}(x)\partial_i\partial_jf(x)+F(x)\cdot
\nabla f(x),\]
$a(x)=\sum_{k=1}^m\beta_k(x)\zeta_k\zeta_k^T$.  (Note that $m$ may be infinity 
provided $\sum_{k=1}^{\infty}\beta_k(x)|\zeta_k|^2<\infty$.)

Setting $\eta_n(t)=\frac {[nt]}n$, 
\[X_n(t)=X(0)+\sum_{k=1}^mW_k(\int_0^{\eta_n(t)}\beta_k(X_n(s))ds
)\zeta_k+\int_0^{\eta_n(t)}F(X_n(s))ds\]
has a unique piecewise constant solution that has the 
same distribution as the usual Euler approximation to 
the corresponding It\^o equation. Under appropriate growth 
conditions on the $\beta_k$ and $F$ (for example, if the $\beta_k$ and $
F$ 
are bounded), $\{X_n\}$ is relatively compact for 
convergence in distribution in $D_{{\Bbb R}^d}[0,\infty )$, 
and if the $\beta_k$ 
and $F$ are continuous, any limit point $X$ of $\{X_n\}$ will 
satisfy (\ref{mtceq}). Lemma \ref{cmpexist} gives that $X$ is 
compatible with $Y$. 

Uniqueness of the distribution of $X$ would follow from 
uniqueness for the corresponding martingale problem; 
however, except for $m=1$, no pathwise uniqueness result 
of any generality is known.  
Let $\tau_k(t)=\int_0^t\beta_k(X(s))ds$ and $\gamma (t)=\int_0^tF
(X(s))ds$.  Then
\begin{eqnarray*}
\dot{\tau}_l(t)&=&\beta_l(X(0)+\sum_kW_k(\tau_k(t))\zeta_k+\gamma 
(t))\\
\dot{\gamma }(t)&=&F(X(0)+\sum_kW_k(\tau_k(t))\zeta_k+\gamma (t))
,\end{eqnarray*}
which is a random ordinary differential equation.  Except 
in the case $\beta_k$ all constant, however, the right side is at best 
H\"older of order 1/2.

}
\end{example}

\bibliography{wkstr}

\end{document}